\documentclass[11pt]{amsart}
\usepackage{latexsym}
\usepackage{fullpage}
\usepackage{amssymb}
\usepackage{amscd}
\usepackage{float}
\usepackage{graphicx}
\usepackage{amsfonts}
\usepackage{pb-diagram}
 \usepackage{amsmath,amscd}

\setlength{\topmargin}{0cm}
\setlength{\textheight}{22cm}
\setlength{\textwidth}{16cm}
\setlength{\oddsidemargin}{-0.1cm}
\setlength{\evensidemargin}{-0.1cm}

\newtheorem{thm}{Theorem}
\newtheorem{cor}{Corollary}
\newtheorem{lemma}{Lemma}
\newtheorem{prop}{Proposition}
\newtheorem{defn}{Definition}
\newtheorem{remark}{Remark}

\newtheorem{ex}{Example}
\newtheorem{nt}{Notation}

\begin{document}

\title[The HOMFLYPT skein module of $S^1 \times S^2$ via braids]
  {The HOMFLYPT skein module of $S^1 \times S^2$ via braids}

\author{Ioannis Diamantis}
\address{Department of Data Analytics and Digitalisation,
Maastricht University, School of Business and Economics,
P.O.Box 616, 6200 MD, Maastricht,
The Netherlands.
}
\email{i.diamantis@maastrichtuniversity.nl}

\keywords{HOMFLYPT skein module, solid torus, \(S^1 \times S^2\), braid groups, Hecke algebras, Markov trace, braid band moves, knot invariants, skein relations}

\subjclass[2020]{57K31, 57K14, 20F36, 20C08, 57K10, 57K12, 57K99}

\setcounter{section}{-1}

\date{}

\begin{abstract}
In this paper we compute the HOMFLYPT skein module of $S^1 \times S^2\, \cong
\, L(0, 1)$, denoted $\mathcal{S}(S^1 \times S^2)$, using braid-theoretic techniques. We extend the Lambropoulou invariant, $X$, for links in the solid torus ST to links in $S^1 \times S^2$, by solving an infinite system of equations of the form $X_{\widehat{a}} = X_{\widehat{bbm(a)}}$, where $bbm(a)$ denotes all possible band moves applied to $a$, for all $a$ in a basis of $\mathcal{S}(ST)$. We show that the free part of $\mathcal{S}(S^1 \times S^2)$ is generated by the empty link, while all other elements are torsion. 
\end{abstract}

\maketitle

\section{Introduction}

Skein modules were independently introduced by Przytycki \cite{P} and Turaev \cite{Tu} as generalizations of knot polynomials in $S^3$ to knot polynomials in arbitrary 3-manifolds. The essence is that skein modules are quotients of free modules over ambient isotopy classes of links in 3-manifolds by properly chosen local (skein) relations. More precisely:

\begin{defn}\rm
Let $M$ be an oriented $3$-manifold, $R=\mathbb{C}[u^{\pm1},z^{\pm1}]$, $\mathcal{L}$ the set of all oriented links in $M$ up to ambient isotopy in $M$ and let $S$ be the submodule of $R\mathcal{L}$ generated by the skein expressions $u^{-1}L_{+}-uL_{-}-zL_{0}$, where
$L_{+}$, $L_{-}$ and $L_{0}$ comprise a Conway triple represented schematically by the illustrations in Figure~\ref{skein}.

\begin{figure}[h]
\begin{center}
\includegraphics[width=1.7in]{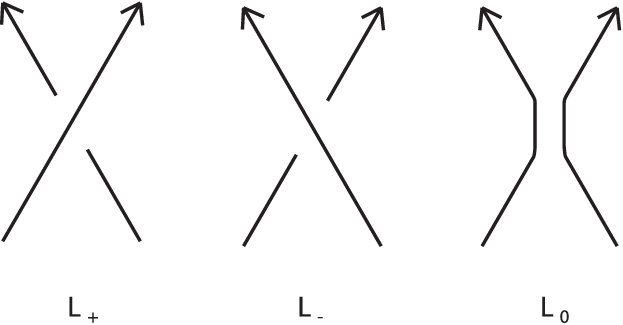}
\end{center}
\caption{The links $L_{+}, L_{-}, L_{0}$ locally.}
\label{skein}
\end{figure}

\noindent For convenience we allow the empty knot, $\emptyset$, and add the relation $u^{-1} \emptyset -u\emptyset =zT_{1}$, where $T_{1}$ denotes the trivial knot. Then the {\it HOMFLYPT skein module} of $M$ is defined to be:

\begin{equation*}
\mathcal{S} \left(M\right)=\mathcal{S} \left(M;{\mathbb C}\left[u^{\pm 1} ,z^{\pm 1} \right],u^{-1} L_{+} -uL_{-} -zL{}_{0} \right)={\raise0.7ex\hbox{$
R\mathcal{L} $}\!\mathord{\left/ {\vphantom {R\mathcal{L} S }} \right. \kern-\nulldelimiterspace}\!\lower0.7ex\hbox{$ S  $}}.
\end{equation*}
\end{defn}

\smallbreak

Braid-theoretic methods have become a central tool for studying skein modules of $3$-manifolds. The key idea is to represent links in a given manifold via \emph{mixed braids} in a simpler space (typically a solid torus) and then impose additional relations, such as \emph{braid band moves}, to recover the original manifold’s topology. This braid-based perspective has been successfully applied to compute various skein modules of 3-manifolds (see \cite{D, D1, DL2,DL3, DL4, DLP} for examples of HOMFLYPT skein modules and \cite{D2022CCM, D2023arXiv, D2023KBSM, D2} for Kauffman bracket skein modules of certain 3-manifolds via braids). In each case, translating knots and links into the language of braids (together with appropriate moves) provides an algebraic framework for managing the complexity of the skein relations.

One case of particular interest is the manifold $S^1 \times S^2$. Topologically, $S^1 \times S^2$ is the simplest example of a $3$-manifold whose skein module contains torsion, making its computation particularly subtle. The pioneering work of Gilmer and Zhong~\cite{GZ1} computed the HOMFLYPT skein module of $S^1 \times S^2$ diagrammatically, and, after localizing the coefficient ring to invert certain combinations of skein parameters, they found the module to be free of rank one, generated by the empty link. Later, Lavrov and Rutherford~\cite{LR3} approached the same problem through a Legendrian link perspective, constructing a well-defined HOMFLYPT-type invariant for links in $S^1 \times S^2$, and similarly observing that the skein module becomes free of rank one under localization. Both approaches rely on inverting specific elements of the ring, effectively removing torsion from the natural skein module. This fact motivated a fresh examination of $S^1 \times S^2$ using the braid-theoretic framework, which allows us to work over the full polynomial ring without localization. In particular, our computation reveals explicitly that the free part of the module is generated by the empty knot, while all other skein classes are torsion. Thus, our result complements the findings of Gilmer–Zhong and Lavrov–Rutherford, even though an explicit presentation of the torsion submodule via braids remains an open problem.

In this paper, we revisit the HOMFLYPT skein module of $S^1 \times S^2$ using a purely braid-theoretic and algebraic approach, extending techniques developed for other $3$-manifolds~\cite{DL3, DAMS}. We exploit the fact that $S^1 \times S^2$ can be described as $S^3$ with an unknot removed and reattached ($0$--framed surgery). Consequently, any link in $S^1 \times S^2$ can be represented as a \emph{mixed link} in the solid torus $ST$ (the complement of an unknot in $S^3$), together with an additional move corresponding to the $0$--surgery. More concretely, there is a well-known correspondence between links in $S^1 \times S^2$ and mixed links in the solid torus, $ST$, where isotopy in $S^1 \times S^2$ is equivalent to isotopy in $ST$ augmented by a finite sequence of {\it braid band moves} (bbm)~\cite{LR1,LR2,DL1}. The braid band move is an isotopy move specific to the surgery description of $S^1 \times S^2$: it consists of cutting one strand of the mixed braid and reconnecting it to another part of the braid (introducing a half-twist in the process) to simulate the effect of the $0$--framing. In essence, a band move on mixed braids corresponds to the second Kirby move in the topological description of $S^1 \times S^2$. Encoding these moves algebraically, allows us to translate the problem of computing $\mathcal{S}(S^1 \times S^2)$ into an algebraic one.

Our approach builds upon the HOMFLYPT-type invariant $X$ for links in the solid torus, originally constructed by Lambropoulou via a Markov trace on the generalized Hecke algebra of type B~\cite{La1}. This invariant $X$ assigns to every braid in $ST$ a polynomial analogous to the HOMFLYPT polynomial, and it is powerful enough to distinguish all basis elements of the skein module $\mathcal{S}(ST)$. To extend $X$ from the solid torus to the manifold $S^1 \times S^2$, we impose the condition that $X$ remains invariant under the braid band move. In other words, whenever a mixed braid $\alpha$ in $ST$ can undergo a band move to $\alpha'$ (we write $\text{bbm}(\alpha) = \alpha'$), we require $X$ to assign the same value to their closures in $S^1 \times S^2$. Equivalently, for every basis braid $\alpha$ in $\mathcal{S}(ST)$ and every possible braid band move on $\alpha$, we enforce the \emph{braid invariance condition}:

\begin{equation}\label{system}
    X\big(\widehat{\alpha}\big) \;=\; X\big(\widehat{\text{bbm}(\alpha)}\big),
\end{equation}

\noindent where $\widehat{\alpha}$ denotes the link in $S^1 \times S^2$ obtained by closing $\alpha$ in the mixed link diagram (and similarly for $\widehat{\text{bbm}(\alpha)}$). This condition translates into an infinite system of linear skein relations (essentially, equations in the polynomial variables $q$, $z$ and and the $s_i$'s) satisfied by the values of $X$ on the basis of $\mathcal{S}(ST)$. Solving this infinite system of equations is the key to computing the HOMFLYPT skein module of $S^1 \times S^2$. In practice, we work with a new basis for $\mathcal{S}(ST)$ (a specially chosen basis $\Lambda$ introduced in~\cite{DL2}) that is particularly convenient for handling braid band moves, so that the infinite relations can be organized and resolved systematically. By iteratively solving these relations, we explicitly express every element of $\mathcal{S}(S^1 \times S^2)$ in terms of a simplified generating set.

The outcome of our computation is a description of the free part of $\mathcal{S}(S^1 \times S^2)$, stated informally as follows:

\smallbreak

\emph{The free part of the HOMFLYPT skein module of $S^1 \times S^2$ is generated by the empty link (or the unknot), with all other elements appearing in a torsion submodule.}

\smallbreak

\noindent More precisely, over the ring $R = \mathbb{C}[q^{\pm1}, z^{\pm1}]$, one has a direct sum decomposition
\[
\mathcal{S}(S^1 \times S^2) \;\cong\; R\{\emptyset\} \;\oplus\; T,
\]
where $R\{\emptyset\}$ is the rank-$1$ free submodule generated by the empty link, and $T$ is a torsion submodule. This result resolves the apparent discrepancy with Gilmer-Zhong’s work: it confirms that one independent HOMFLYPT-type invariant exists in $S^1 \times S^2$ (generated by the unknot), while also explicitly identifying the torsion that was invisible when working over a localized coefficient ring.

\bigbreak

\noindent {\bf Outline of the method.} For the reader’s convenience, we summarize here the main steps of our computation:

\begin{itemize}
\item[$\bullet$] We start from elements in the standard by now basis of $\mathcal{S}({\rm ST})$, $\Lambda$, presented in \cite{HK}. Then, following the technique in \cite{DL2}, we express these elements into sums of elements in an augmented set, $\Lambda^{aug}$, using conjugation and stabilization moves.
\item[$\bullet$] We obtain an infinite system of equations by performing bbm's on the $1^{st}$ moving strand of the elements in $\Lambda^{aug}$, obtained in the previous step.
\item[$\bullet$] Using now the inverse of the change of basis matrix presented in \cite{DL2}, we express these elements into sums of elements in the basic set of ST, $\Lambda^{\prime}$.
\item[$\bullet$] Then, the following diagram is shown to commute:
\[
\begin{array}{ccccc}
s & \overset{tr}{\longleftarrow} & \left({\Lambda^{aug}}\right)^{\prime} \ni \tau^{\prime} & {\widehat{\cong}} & a\cdot \tau + \underset{i}{\Sigma} a_i\cdot \tau_i \\
\uparrow  &                              & \overset{bbm_1}{\downarrow}                &                   & \overset{bbm_i}{\downarrow} \\
|	&                              &  bbm_1(\tau^{\prime})                      & {\widehat{\cong}} &  a\cdot \tau_+\cdot \delta_i^{\pm 1} + \underset{i}{\Sigma} a_i\cdot \tau_{i_+}\cdot \delta_i^{\pm 1}\\
|	&                              &                  |                          &                   & {\widehat{\cong}} \\
|	&                              &                |                  &                   &  \underset{j}{\Sigma} b_j\cdot \tau_{j_+}\cdot \sigma_1^{\pm 1}  \\
	
\downarrow  &                              &             \downarrow                               &                   & {\widehat{\cong}} \\
	\underset{i}{\Sigma} c_i\cdot s_i & \overset{tr}{\longleftarrow} & \underset{i}{\Sigma} c_i\cdot \tau_i^{\prime} & {\widehat{\cong}} & \underset{u}{\Sigma} a_u \cdot \tau_u \\
\end{array}
\]

\item[$\bullet$] The sets $S$ and $S_k$ are now introduced (Definition~\ref{Ssets}), which consist of monomials in $s_i$'s, the unknowns of the system (\ref{system}).
\item[$\bullet$] An ordering relation is then defined on these sets and it is shown that the sets $S_k$ equipped with this ordering relation are totally ordered sets.
\item[$\bullet$] It is then shown that the infinite system of Eq.~(\ref{system}) splits into infinitely many self-contained subsystems of equations, each one corresponding to a set $S_k, k\in \mathbb{Z}$.
\item[$\bullet$] For an arbitrary subsystem obtained from elements in $\Lambda_k^{aug}$, it is shown that all unknowns in $S_k$ can be written as combinations of the minimum element in $S_k$ and thus, they are torsion elements in $\mathcal{S}(S^1 \times S^2)$.
\item[$\bullet$] The minimum element in $S_k$ is also shown to be a torsion element in $\mathcal{S}(S^1 \times S^2)$.
\item[$\bullet$] Finally, it is concluded that the free part of $\mathcal{S}(S^1 \times S^2)$ is generated by the empty link.
\end{itemize}

\bigbreak

Our computation of $\mathcal{S}(S^1 \times S^2)$ via the braid approach fits into a broader program of using braids to understand skein modules. Similar braid-theoretic techniques have been used recently to tackle the HOMFLYPT skein modules of the solid torus and of lens spaces~\cite{DL3,DL4, DGLM}, as well as the Kauffman bracket skein modules of various $3$-manifolds via mixed braids~\cite{D2022CCM, D2023arXiv, D2023KBSM}. These developments underscore the flexibility and strength of the braid approach: by recasting topological problems in algebraic terms, one can solve otherwise intractable skein equations and even uncover new algebraic phenomena.

The paper is organized as follows: In \S~\ref{basics} we recall the setting and essential braid-theoretic techniques, including isotopy moves and braid equivalence in $L(p, 1)$. In \S~\ref{newbas} we present an alternative basis $\Lambda$ for $\mathcal{S}(ST)$, introduced in \cite{DL2}, which allows for a natural formulation of braid band moves. In \S~\ref{HOMS} we describe the infinite system of equations obtained from braid band moves and we demonstrate that solving this system yields the HOMFLYPT skein module of $S^1 \times S^2$.

\section{Algebraic and topological setting}\label{basics}

\subsection{Mixed links and isotopy in $L(0,1)$}
 
We consider $\mathrm{ST}$ to be the complement of a solid torus in $S^3$. As explained in \cite{LR1, LR2, DL1}, an oriented link $L$ in $\mathrm{ST}$ can be represented by an oriented \textit{mixed link} in $S^{3}$, that is, a link in $S^{3}$ consisting of the unknotted fixed part $\widehat{I}$ representing the complementary solid torus in $S^3$, and the moving part $L$ that links with $\widehat{I}$. A \textit{mixed link diagram} is a diagram $\widehat{I} \cup \widetilde{L}$ of $\widehat{I} \cup L$ on the plane of $\widehat{I}$, where this plane is equipped with the top-to-bottom orientation of $I$ (for an illustration see Figure~\ref{mli}).

\begin{figure}[H]
\begin{center}\includegraphics[width=1.4in]{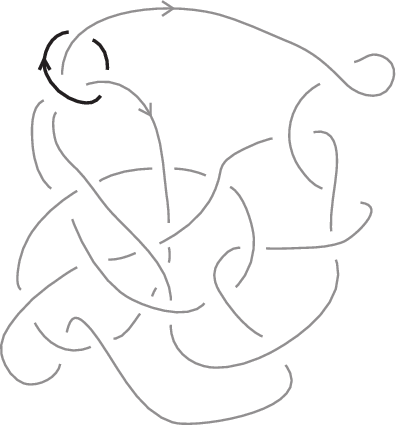}
\end{center}
\caption{A link in ST as a mixed link in $S^3$.}
\label{mli}
\end{figure}

It is well known that the lens space $L(0,1) \cong S^1 \times S^2$ can be obtained from $S^3$ by surgery on the unknot with surgery coefficient $0$. Surgery along the unknot can be realized by considering first the complementary solid torus and then attaching to it another solid torus according to a homeomorphism on their boundaries. In the case of $S^1 \times S^2$, the homeomorphism $h$ on the boundaries of the solid tori maps the meridian $m_1$ of one solid torus to the meridian $m_2$ of the other, as illustrated in Figure~\ref{stst}.

\[
\begin{matrix}
h & : & \partial {\rm ST}_1 & \longrightarrow & \partial {\rm ST}_2\\
  &   &   m_1                & \longmapsto     &  m_2
\end{matrix}
\]

\begin{figure}
\begin{center}
\includegraphics[width=3.7in]{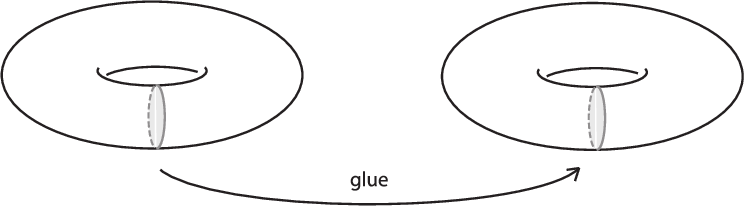}
\end{center}
\caption{Gluing two solid tori to obtain $S^1 \times S^2$.}
\label{stst}
\end{figure}

Thus, isotopy in $S^1 \times S^2$ can be viewed as isotopy in $\mathrm{ST}$ together with band moves in $S^3$, which reflect the surgery description of the manifold. Namely, two links in $S^1 \times S^2$ are isotopic if and only if any two corresponding mixed link diagrams of theirs differ by planar isotopy, a finite sequence of the Reidemeister moves for the standard part of the mixed link, the mixed Reidemeister moves that involve both the fixed and the moving part of the mixed links and which are illustrated in Figure~\ref{mr}, and the band moves, that reflect the surgery description of $S^1 \times S^2$, and which are illustrated in Figure~\ref{bmov1}.

\begin{figure}[H]
\begin{center}\includegraphics[width=3.6in]{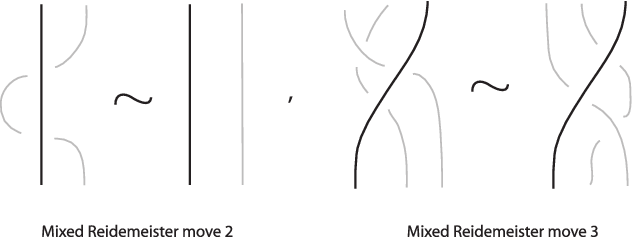}
\end{center}
\caption{The mixed Reidemeister moves.}
\label{mr}
\end{figure}

It is worth mentioning that there are two distinct types of band moves (depending on orientation), and that in \cite{DL1} it is shown that in order to describe isotopy for knots and links in a c.c.o. $3$-manifold, it suffices to consider only one type of band move. 

\begin{figure}[H]
\begin{center}
\includegraphics[width=5.5in]{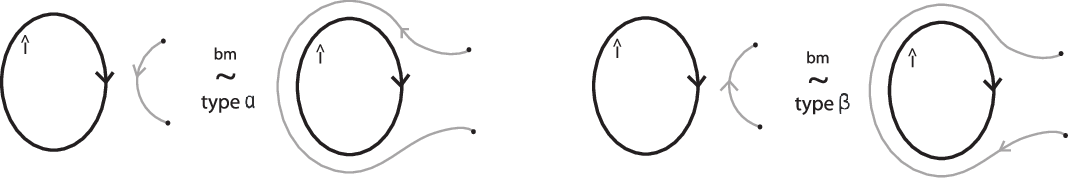}
\end{center}
\caption{The two types of band moves.}
\label{bmov1}
\end{figure}

Consequently, isotopy between oriented links in $S^1 \times S^2$ is reflected in $S^3$ by means of the following result (cf. Thm.~5.8 in \cite{LR1}, Thm.~6 in \cite{DL1}): 

\smallbreak

\begin{thm}
Two oriented links in $S^1 \times S^2$ are isotopic if and only if two corresponding mixed link diagrams of theirs differ by isotopy in $\mathrm{ST}$ together with a finite sequence of one type of band move.
\end{thm}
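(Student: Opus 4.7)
The plan is to prove the two implications separately and then to reduce both types of band moves to a single one.

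\textbf{Sufficiency.} First, I would observe that any isotopy carried out entirely in $\mathrm{ST}_1 = \mathrm{ST}$ is automatically an isotopy in $S^1 \times S^2 \supset \mathrm{ST}$. It then remains to verify that each band move of Figure~\ref{bmov1} is realised by an ambient isotopy of the closed manifold. Such a band move pushes an arc of the moving link across a meridian disk of the attached solid torus $\mathrm{ST}_2$; since $\mathrm{ST}_2 \subset S^1 \times S^2$ is a genuine solid torus, its meridian disks bound embedded disks in the manifold, and sliding an arc across one of them is an honest ambient isotopy. The $0$-framing of the surgery precisely accounts for the twist that appears in the diagrammatic move.

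\textbf{Necessity.} Given two isotopic links $L_1, L_2 \subset S^1 \times S^2$, I would put the ambient isotopy $F_t$ in general position with respect to the splitting torus $\partial \mathrm{ST}_2$. Then at all but finitely many times the image of each link component meets $\mathrm{ST}_2$ only in unknotted arcs parallel to the core of $\mathrm{ST}_2$, and pushing those arcs out through meridian disks produces a mixed link in $\mathrm{ST}_1 \subset S^3$. Between consecutive critical times the isotopy stays transverse to $\partial \mathrm{ST}_2$ and hence descends to an isotopy of mixed links in $\mathrm{ST}$, realised diagrammatically by planar isotopy, Reidemeister moves on the moving part, and the mixed Reidemeister moves of Figure~\ref{mr}. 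At each critical time an arc passes through a meridian disk of $\mathrm{ST}_2$, and projecting the trace of that disk back to $S^3$ is precisely a band move, with the $0$-framing forcing the twist shown in Figure~\ref{bmov1}.

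\textbf{Reduction to one type.} To show that only one of the two band-move types is needed, I would follow the argument of~\cite{DL1}: a band move of the ``wrong'' type can be converted into one of the chosen type by first inserting a canceling pair of Reidemeister-I kinks to flip the local orientation of the band, and then using the mixed Reidemeister moves, so that the net effect of a single opposite-type band move coincides with a finite sequence of chosen-type band moves together with isotopy in $\mathrm{ST}$.

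\textbf{Main obstacle.} The technical heart of the proof is the general-position analysis in the necessity direction: one must verify that any ambient isotopy in $S^1 \times S^2$ can be perturbed so that its restriction to $\partial \mathrm{ST}_2$ has only transverse crossings, and that the nongeneric events (tangencies, cusps) contribute nothing beyond the moves already listed. Handling the framing data correctly, so that the twist in Figure~\ref{bmov1} emerges automatically from the $0$-surgery coefficient rather than being imposed by hand, is the step I would expect to require the most care.
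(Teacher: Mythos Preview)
The paper does not prove this theorem; it is stated with a citation to Theorem~5.8 of \cite{LR1} and Theorem~6 of \cite{DL1} and no argument is given. So there is no ``paper's own proof'' to compare against, and your proposal is in effect a sketch of how one might reproduce the cited results.

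As a sketch it is broadly on the right track, but the geometry in the necessity direction is muddled. The relevant general-position argument is not with respect to the splitting torus $\partial\mathrm{ST}_2$ but with respect to the \emph{core circle} of $\mathrm{ST}_2$: a link in $S^1\times S^2$ can be isotoped off that core (a codimension-two set), and an ambient isotopy between two such links, put in general position with the core, crosses it at finitely many instants; each crossing is exactly a band move. Your description of arcs in $\mathrm{ST}_2$ ``parallel to the core'' being pushed out through meridian disks is backwards---longitudinal arcs are precisely the ones that \emph{cannot} be removed by sliding over a meridian disk. What one actually needs is that, away from the critical times, the link misses a small tubular neighbourhood of the core, so it already lies in $\mathrm{ST}_1$. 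The reduction to one type of band move via \cite{DL1} is correctly invoked.
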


\subsection{Mixed braids and braid equivalence for knots and links in $L(0,1)$}

By the Alexander theorem for knots in the solid torus (cf. Thm.~1 in \cite{La2}), a mixed link diagram $\widehat{I}\cup \widetilde{L}$ of $\widehat{I}\cup L$ may be turned into a \textit{mixed braid} $I\cup \beta$ with isotopic closure (see Figure~\ref{cl}). This is a braid in $S^{3}$ where, without loss of generality, the first strand represents $\widehat{I}$ (the fixed part) and the other strands, $\beta$, represent the moving part $L$. The subbraid $\beta$ is called the \textit{moving part} of $I\cup \beta$ (see Figure~\ref{cl} and bottom left of Figure~\ref{bmov}).

\begin{figure}[H]
\begin{center}\includegraphics[width=2.5in]{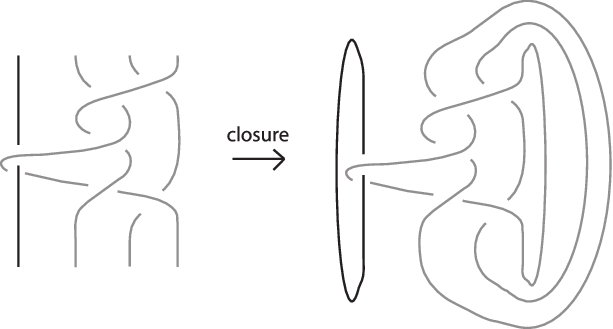}
\end{center}
\caption{The closure of a mixed braid to a mixed link.}
\label{cl}
\end{figure}

To translate isotopy for links in $S^1 \times S^2$ into braid equivalence, we first define a \textit{braid band move} (bbm) to be a move between mixed braids, corresponding to a band move between their closures. It starts with a small downward-oriented band which, before sliding along a surgery strand, acquires one twist (either \textit{positive} or \textit{negative}; see the bottom right-hand side of Figure~\ref{bmov}).

\begin{figure}[h]
\begin{center}
\includegraphics[width=3.3in]{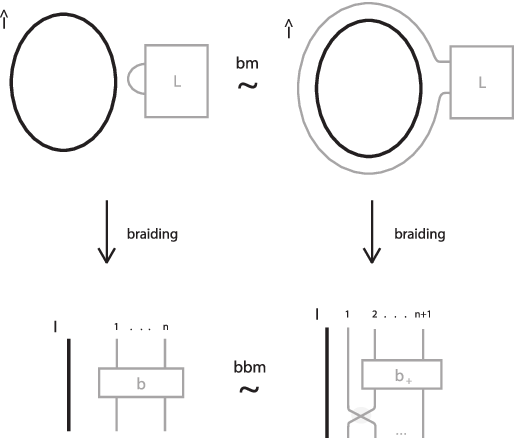}
\end{center}
\caption{Isotopy in $S^1 \times S^2$ and the two types of braid band moves on mixed braids.}
\label{bmov}
\end{figure}

\begin{remark}\rm
In \cite{LR2} it is shown that the position at which the two components are connected after performing a bbm is arbitrary.
\end{remark}

The sets of braids corresponding to $\mathrm{ST}$ form groups, namely the Artin braid groups of type B, denoted $B_{1,n}$, with presentation:

\[
B_{1,n} =
\left\langle
\begin{array}{ll}
t, \sigma_{1}, \ldots, \sigma_{n-1} &
\left|
\begin{array}{l}
\sigma_{1}t\sigma_{1}t = t\sigma_{1}t\sigma_{1} \\
t\sigma_{i} = \sigma_{i}t, \quad i>1 \\
\sigma_i\sigma_{i+1}\sigma_i = \sigma_{i+1}\sigma_i\sigma_{i+1}, \quad 1 \leq i \leq n-2 \\
\sigma_i\sigma_j = \sigma_j\sigma_i, \quad |i-j|>1
\end{array}
\right.
\end{array}
\right\rangle
\]

\noindent where the generators $t$ and $\sigma_i$ are illustrated in Figure~\ref{genh}.

\begin{figure}[H]
\begin{center}
\includegraphics[width=4.6in]{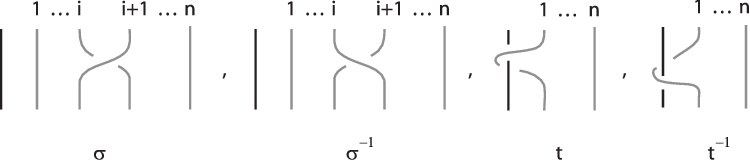}
\end{center}
\caption{The generators of \(B_{1,n}\).}
\label{genh}
\end{figure}

Isotopy in $S^1 \times S^2$ is then translated to the level of mixed braids via the following theorem (for an illustration see Figure!\ref{mbeq}):

\begin{thm}[Theorem~5, \cite{LR2}] \label{markov}
Let $L_1, L_2$ be two oriented links in $S^1 \times S^2$ and let $I\cup \beta_1$, $I\cup \beta_2$ be two corresponding mixed braids in $S^{3}$. Then $L_1$ is isotopic to $L_2$ in $S^1 \times S^2$ if and only if $I\cup \beta_1$ is equivalent to $I\cup \beta_2$ in $\mathcal{B}$ by the following moves:
\[
\begin{array}{clll}
(i)  & \text{Conjugation:}         & \alpha \sim \beta^{-1} \alpha \beta, & \text{for } \alpha, \beta \in B_{1,n}. \\
(ii) & \text{Stabilization moves:} & \alpha \sim \alpha \sigma_{n}^{\pm 1} \in B_{1,n+1}, & \text{for } \alpha \in B_{1,n}. \\
(iii) & \text{Loop conjugation:}   & \alpha \sim t^{\pm 1} \alpha t^{\mp 1}, & \text{for } \alpha \in B_{1,n}. \\
(iv) & \text{Braid band moves:}    & \alpha \sim  \alpha_+ \sigma_1^{\pm 1}, & \text{with } \alpha_+ \in B_{1,n+1},
\end{array}
\]

\noindent where $\alpha_+$ is the word $\alpha$ with all indices shifted by $+1$.
\end{thm}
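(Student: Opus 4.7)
The plan is to reduce the statement to two ingredients already essentially in hand: (a) the topological equivalence theorem stated just before the subsection on mixed braids, which asserts that isotopy in $S^1 \times S^2$ is generated by isotopy in $\mathrm{ST}$ together with finitely many band moves of one type, and (b) the Markov-type theorem for mixed braids in the solid torus (Theorem~3 of \cite{La2}), which translates isotopy in $\mathrm{ST}$ between closures of mixed braids into moves (i)--(iii) at the level of $B_{1,n}$. What is then new is to show that each band move on closures, after braiding via Alexander's theorem for $\mathrm{ST}$, reduces to the single algebraic move (iv), $\alpha \sim \alpha_+ \sigma_1^{\pm 1}$.

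For the ``only if'' direction, I would start with an isotopy in $S^1 \times S^2$ carrying $L_1$ to $L_2$. By (a) the corresponding mixed link diagrams are connected by a finite sequence consisting of ambient isotopies, mixed Reidemeister moves and band moves. Braiding each intermediate diagram produces a finite chain of mixed braids $I\cup\beta_1 = I\cup\gamma_0, I\cup\gamma_1, \ldots, I\cup\gamma_k = I\cup\beta_2$ whose consecutive members differ at the level of closures either by an $\mathrm{ST}$-isotopy or by a single band move. Lambropoulou's Markov theorem handles the $\mathrm{ST}$-isotopy steps via (i)--(iii). For each band-move step, I would braid the two ends of the band at the first moving position and verify that the local algebraic change is precisely (iv): the index shift $\alpha \mapsto \alpha_+$ records the insertion of a new moving strand adjacent to the fixed strand $I$, while the extra factor $\sigma_1^{\pm 1}$ records the half-twist picked up when sliding the band along the surgery strand.

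For the ``if'' direction I would check that each of the four moves preserves the isotopy class of the closure in $S^1 \times S^2$: moves (i)--(iii) preserve isotopy of the closure already in $\mathrm{ST}$, hence a fortiori in $S^1 \times S^2$; move (iv) corresponds to a band move by the very definition of a braid band move, and band moves preserve isotopy in $S^1 \times S^2$ by (a).

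The main obstacle will be the first half of the forward direction: showing that a band move performed at an arbitrary location on an arbitrary moving strand reduces, up to moves (i)--(iii), to exactly the normal form (iv) attached to the first moving strand. The remark cited from \cite{LR2}, that the attachment position of the two components is topologically arbitrary, is what makes this reduction possible; combined with conjugation (to cyclically rotate the braid word and bring the relevant strand into position) and stabilization (to insert the needed adjacent strand next to $I$), one can always transport the bbm onto the first moving strand. The bookkeeping of signs and of the interaction between these braid moves and the two types of band move (the fact, from \cite{DL1}, that one type suffices) is the technical heart of the argument and is where the care is required.
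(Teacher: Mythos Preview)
The paper does not give its own proof of this theorem: it is stated as a citation (Theorem~5 of \cite{LR2}) and simply quoted, with the only additional remark being the observation that moves (i)--(iii) correspond to link isotopy in $\mathrm{ST}$. So there is no in-paper argument to compare against.

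Your sketch is the natural strategy and matches the architecture of the original result in \cite{LR2}: combine the Markov-type theorem for $\mathrm{ST}$ (moves (i)--(iii)) with the topological reduction of isotopy in $S^1\times S^2$ to isotopy in $\mathrm{ST}$ plus band moves, and then braid the band move into the algebraic form (iv). The one point worth sharpening is your treatment of the ``arbitrary location'' issue. You invoke the remark from \cite{LR2} that the attachment position is arbitrary, together with conjugation and stabilization, to transport the bbm to the first moving strand; but in the logical order of \cite{LR2} that arbitrariness is a \emph{consequence} of the braid equivalence theorem rather than an input to it. The cleaner route (and the one taken in \cite{LR1,LR2,DL1}) is to first put the band move itself into a standard position at the top of the braid next to the surgery strand \emph{before} braiding, using the $L$-moves/Alexander procedure for mixed links, so that the algebraic effect is manifestly $\alpha\mapsto\alpha_+\sigma_1^{\pm1}$ without needing to move the bbm around afterwards via (i)--(iii). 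Apart from this ordering issue your outline is sound.
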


Note that moves (i), (ii) and (iii) correspond to link isotopy in {\rm ST}.

\begin{figure}[H]
\begin{center}
\includegraphics[width=5.5in]{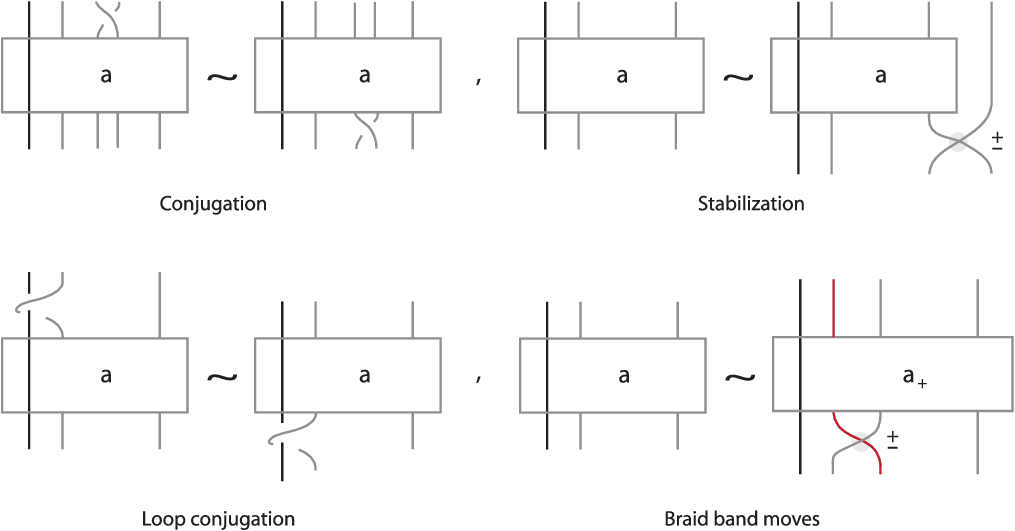}
\end{center}
\caption{Mixed braid equivalence.}
\label{mbeq}
\end{figure}

\begin{nt}\rm
We denote a braid band move by bbm and, specifically, the result of a positive or negative braid band move performed on the $i^{\text{th}}$ moving strand of a mixed braid $\beta$ by $bbm_{\pm i}(\beta)$.
\end{nt}

\subsection{The HOMFLYPT skein module of \(\mathrm{ST}\) via braids}\label{SolidTorus}

In \cite{HK} the HOMFLYPT skein module of the solid torus has been computed using diagrammatic methods by means of the following theorem:

\begin{thm}[Kidwell--Hoste] \label{turaev}
The HOMFLYPT skein module skein module of the solid torus, $\mathcal{S}({\rm ST})$, is a free, infinitely generated $\mathbb{Z}[u^{\pm1},z^{\pm1}]$-module isomorphic to the symmetric tensor algebra $SR\widehat{\pi}^0$, where $\widehat{\pi}^0$ denotes the conjugacy classes of non trivial elements of $\pi_1(\rm ST)$.
\end{thm}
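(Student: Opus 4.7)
The plan is to produce a map $\Phi: SR\widehat{\pi}^0 \to \mathcal{S}({\rm ST})$ and show it is an isomorphism. First, I would identify $\widehat{\pi}^0$ concretely: since $\pi_1({\rm ST}) \cong \mathbb{Z}$ is abelian, its conjugacy classes of non-trivial elements are in bijection with $\mathbb{Z}\setminus\{0\}$. For each $k\neq 0$, let $x_k$ denote the oriented closed curve in ${\rm ST}$ that wraps $k$ times around the core, realized as the closure of $t^k$ in the mixed-braid picture. Define $\Phi$ by sending the degree-$k$ generator to $x_k$ and extending multiplicatively via disjoint union. This is well-defined on the symmetric algebra because disjoint union is commutative up to ambient isotopy in ${\rm ST}$.

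Surjectivity of $\Phi$ proceeds by an induction argument in the spirit of Turaev. Given a link $L\subset {\rm ST}$, present it as the closure of a mixed braid $I\cup\beta$ and perform a two-step reduction: (a) use the HOMFLYPT skein relation $u^{-1}L_{+} - uL_{-} = zL_{0}$ repeatedly to resolve every self-crossing among the moving strands, writing $L$ as an $R$-linear combination of closures of products of the loop generators $t^{\pm 1}$; (b) push each such product into a disjoint union of core-parallel curves by using conjugation (which is free in ${\rm ST}$ up to isotopy) and Markov-type stabilization to separate the components. I would control termination by induction on the lexicographic pair (number of moving-strand crossings, braid word length), so that the end result is always an $R$-linear combination of monomials in the $x_k$'s.

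The injectivity, i.e.\ the linear independence of the putative basis, is the main obstacle and the heart of the theorem. For this I would invoke the Lambropoulou Markov trace $\tau$ on the generalized Hecke algebra $H_{1,n}(q)$ of type B, which produces a HOMFLYPT-type invariant $X$ for oriented links in ${\rm ST}$ taking values in a polynomial ring $\mathbb{C}[q^{\pm 1},z^{\pm 1}, s_1^{\pm 1}, s_2^{\pm 1}, \ldots]$, with each parameter $s_k$ assigned to $\widehat{t^k}$. Because $X$ factors through the skein relation and because distinct monomials in $s_{k_1}s_{k_2}\cdots s_{k_m}$ are $R$-linearly independent in the target ring, the images $\Phi(x_{k_1}\cdots x_{k_m})$ must be $R$-linearly independent in $\mathcal{S}({\rm ST})$. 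Hence $\Phi$ is injective and therefore an isomorphism of $R$-modules.

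The delicate points are (a) checking that the reduction in the surjectivity step actually terminates with a canonical form independent of choices, which requires a careful complexity function and a check that the skein relations do not introduce new crossings that escape the induction; and (b) verifying that the invariant $X$ used for injectivity is intrinsic to $\mathcal{S}({\rm ST})$, i.e.\ that the specialization $u=q$ (together with the framing correction) is consistent with the defining skein relation, so that the independence of the $s$-monomials genuinely reflects independence in the skein module rather than an artifact of the Hecke-algebra framework.
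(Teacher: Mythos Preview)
The paper does not supply its own proof of this theorem; it is quoted as a known result from \cite{HK}, where it was established by diagrammatic methods in the annulus$\times$interval model. So there is no in-paper argument to compare against directly.

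Your route is nonetheless genuinely different from Hoste--Kidwell's. They construct an evaluation map diagrammatically and prove freeness without Hecke algebras. What you propose---injectivity via the Lambropoulou trace on $\mathrm{H}_{1,n}(q)$---is instead precisely the mechanism the present paper invokes a few lines later (see the remark following Theorem~\ref{inv}: $\mathrm{tr}(t^{k_0}{t'_1}^{k_1}\cdots{t'_n}^{k_n})=s_{k_n}\cdots s_{k_0}$, so distinct elements of $\Lambda'$ receive distinct $X$-values). Thus your argument recovers the theorem by the braid approach, which is internally consistent with this paper's framework but historically posterior to \cite{HK}. The diagrammatic proof has the advantage of working directly over $\mathbb{Z}[u^{\pm1},z^{\pm1}]$; the trace proof has the advantage of feeding straight into the lens-space machinery.

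Two small corrections to your sketch. First, the $s_k$ are free indeterminates (one for each $k\in\mathbb{Z}\setminus\{0\}$), not units; the target of $X$ is $R(z,s_k)$, not a Laurent ring in the $s_k$. Second, watch the ground ring: the statement is over $\mathbb{Z}[u^{\pm1},z^{\pm1}]$, while $X$ is built over $\mathbb{C}[q^{\pm1}]$ with $\lambda=\tfrac{z+1-q}{qz}$ adjoined. Independence of the monomials $s_{k_0}\cdots s_{k_n}$ does force independence of the corresponding skein classes, but you must verify that the change of variables $u\leftrightarrow(q,\lambda)$ does not kill any integral relation---exactly the issue you flag in your point~(b), and indeed the one place where real care is needed. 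Your surjectivity step is also a bit telegraphic: after resolving the $\sigma_i$'s you land in monomials in the $t'_i$'s with possible braiding tails, not literally in powers of $t^{\pm1}$; the passage from there to $\Lambda'$ is what \cite{DL2} carries out in detail.
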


A basic element of $\mathcal{S}({\rm ST})$ in the context of \cite{HK}, is illustrated in Figure~\ref{tur}. Note that in the diagrammatic setting of \cite{HK}, ST is considered as ${\rm Annulus} \times {\rm Interval}$. 

\begin{figure}[!ht]
\begin{center}
\includegraphics[width=1.4in]{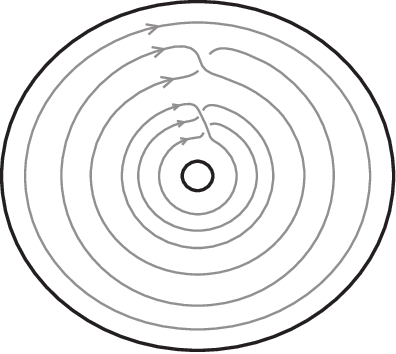}
\end{center}
\caption{A basic element of $\mathcal{S}({\rm ST})$.}
\label{tur}
\end{figure}

As already mentioned in the introduction, in \cite{La1} the most generic analogue of the HOMFLYPT polynomial, \(X\), for links in the solid torus \(\mathrm{ST}\), was derived from the generalized Iwahori--Hecke algebras of type \(\mathrm{B}\), \(\mathrm{H}_{1,n}(q)\), via a unique Markov trace constructed on them. This algebra was defined by Lambropoulou as the quotient of \({\mathbb C}\left[q^{\pm 1} \right]B_{1,n}\) over the quadratic relations \(g_{i}^2=(q-1)g_{i}+q\), corresponding to the HOMFLYPT skein relation. Namely:
\[
\mathrm{H}_{1,n}(q) =
\frac{{\mathbb C}\left[q^{\pm 1} \right]B_{1,n}}{\bigl\langle \sigma_i^2 - (q-1)\sigma_i -q \bigr\rangle}.
\]

Moreover, in \cite{La1} it is shown that the following sets form linear bases for \(\mathrm{H}_{1,n}(q)\) (\cite[Proposition~1 and Theorem~1]{La2}):
\begin{equation}\label{sigman}
\begin{array}{llll}
 (i) & \Sigma_{n} & = & \{t_{i_{1}}^{k_{1}} \ldots t_{i_{r}}^{k_{r}} \cdot \sigma \} ,\quad 0\le i_{1} <\ldots <i_{r} \le n-1, \\
 (ii) & \Sigma^{\prime}_{n} & = & \{{t^{\prime}_{i_1}}^{k_{1}} \ldots {t^{\prime}_{i_r}}^{k_{r}} \cdot \sigma \} ,\quad 0\le i_{1} <\ldots <i_{r} \le n-1, 
\end{array}
\end{equation}

\noindent where \(k_{1}, \ldots ,k_{r} \in {\mathbb Z}\), \(t_0^{\prime} = t_0 := t\), \(t_i^{\prime} = g_i\cdots g_1 t g_1^{-1}\cdots g_i^{-1}\), and \(t_i = g_i\cdots g_1 t g_1\cdots g_i\) are the so-called `looping elements' of \(\mathrm{H}_{1,n}(q)\) (see Figure~\ref{lp}). Here \(\sigma\) is a basis element of the Iwahori--Hecke algebra of type A, \(\mathrm{H}_n(q)\), which, as shown in \cite{Jo}, is of the form:
\[
S =
\bigl\{
(g_{i_1}g_{i_1-1}\cdots g_{i_1-k_1})
(g_{i_2}g_{i_2-1}\cdots g_{i_2-k_2})
\cdots
(g_{i_p}g_{i_p-1}\cdots g_{i_p-k_p})
\bigr\},
\]
for \(1\le i_1 < \cdots < i_p \le n-1\).

\begin{figure}[H]
\begin{center}
\includegraphics[width=2.7in]{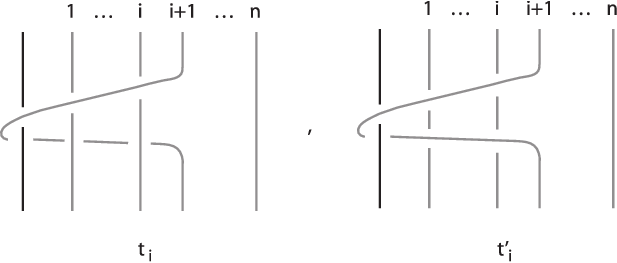}
\end{center}
\caption{The looping generators.}
\label{lp}
\end{figure}

In \cite{La1}, the bases \(\Sigma^{\prime}_{n}\) are used to construct a Markov trace on \(\mathcal{H}:=\bigcup_{n=1}^{\infty} \mathrm{H}_{1,n}(q)\). More precisely:

\begin{thm}[\cite{La1}, Theorem~6] \label{tr}
Given specified elements \(z, s_k\), \(k\in {\mathbb Z}\), in \(R={\mathbb Z}\bigl[q^{\pm 1}\bigr]\), there exists a unique linear Markov trace function
\[
\mathrm{tr} : \mathcal{H} \longrightarrow R(z,s_k), \quad k\in {\mathbb Z},
\]
determined by:
\[
\begin{array}{llll}
(1) & \mathrm{tr}(ab) = \mathrm{tr}(ba) & \text{for all } a,b\in \mathrm{H}_{1,n}(q) \\
(2) & \mathrm{tr}(1) = 1 & \text{in each } \mathrm{H}_{1,n}(q) \\
(3) & \mathrm{tr}(a g_n) = z\,\mathrm{tr}(a) & \text{for } a\in \mathrm{H}_{1,n}(q) \\
(4) & \mathrm{tr}\bigl( a (t'_n)^k \bigr) = s_k\,\mathrm{tr}(a) & \text{for } a\in \mathrm{H}_{1,n}(q),\ k\in {\mathbb Z}.
\end{array}
\]
\end{thm}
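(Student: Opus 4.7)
The plan is to prove uniqueness and existence separately, both by induction on $n$, following the Ocneanu--Jones strategy used to construct the HOMFLYPT trace on the type A Hecke algebra $\mathrm{H}_n(q)$, adapted to the mixed type B setting of $\mathrm{H}_{1,n}(q)$.

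For uniqueness, the argument is that properties (1)--(4) force the value of $\mathrm{tr}$ on every basis element of $\Sigma'_n$. Any $w = {t'_{i_1}}^{k_1} \cdots {t'_{i_r}}^{k_r} \cdot \sigma \in \mathrm{H}_{1,n}(q)$ falls into one of three cases: it already lies in $\mathrm{H}_{1,n-1}(q)$, in which case the induction hypothesis on $n$ applies; its rightmost looping factor is $(t'_{n-1})^{k_r}$, in which case cyclicity (1) moves the preceding factors to the right and property (4) strips the power off as a factor of $s_{k_r}$; or its subword $\sigma$ ends in $g_{n-1}$, in which case cyclicity and (3) strip off a factor of $z$. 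The induction bottoms out at $\mathrm{H}_{1,1}(q) = \mathbb{C}[q^{\pm 1}][t^{\pm 1}]$, where (2) and (4) force $\mathrm{tr}(t^k) = s_k$.

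For existence, I would define $\mathrm{tr}$ on the free $R$-module with basis $\Sigma'_n$ by the very recursion dictated by uniqueness, and then verify that this linear functional descends to a well-defined map on the quotient $\mathrm{H}_{1,n}(q)$. This amounts to checking that the prescribed values agree on the two sides of each defining relation: the quadratic relation $g_i^2 = (q-1)g_i + q$, the braid and commutation relations of type A, the mixed commutations $t g_i = g_i t$ for $i > 1$, and the hub relation $g_1 t g_1 t = t g_1 t g_1$. Once well-definedness is settled, cyclicity (1) on the full algebra reduces by bilinearity to the generator cases $\mathrm{tr}(a g_i) = \mathrm{tr}(g_i a)$ and $\mathrm{tr}(a t'_j) = \mathrm{tr}(t'_j a)$, each handled by a further induction on $n$ using a standard tower-of-algebras decomposition of $\mathrm{H}_{1,n}(q)$ over $\mathrm{H}_{1,n-1}(q)$.

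The main obstacle is the well-definedness check, specifically the compatibility of the recursion with the hub relation and with the conjugation identity $t'_i = g_i t'_{i-1} g_i^{-1}$, which intertwine the $t'_j$'s with the $g_i$'s in a way that has no analogue in type A. Verifying these at the trace level requires repeated application of the quadratic relation to expand $g_i^2$, combined with property (3) to swap $g_i \mapsto z$ after a cyclic rotation, and with property (4) to handle the resulting powers of $t'_{n-1}$. This is the genuine technical content of Lambropoulou's Theorem 6 in \cite{La1}, whose computation I would follow once the inductive framework above is set up.
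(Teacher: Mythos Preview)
The paper does not prove this theorem at all: it is quoted verbatim as Theorem~6 of \cite{La1} and used as a black box, so there is no ``paper's own proof'' to compare against. Your sketch is a faithful outline of the Ocneanu--Jones inductive strategy that Lambropoulou in fact carries out in \cite{La1}, so in that sense your approach matches the original source rather than differing from it; but be aware that for the purposes of the present paper no argument is needed, and any proof you supply would be reproducing \cite{La1} rather than filling a gap here.
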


Using this Markov trace, Lambropoulou constructed a universal HOMFLYPT-type invariant for oriented links in \(\mathrm{ST}\). Namely, let \(\mathcal{L}\) denote the set of oriented links in \(\mathrm{ST}\). Then:

\begin{thm}[\cite{La1}, Definition~1] \label{inv}
The function \(X : \mathcal{L} \to R(z,s_k)\) given by
\[
X_{\widehat{\alpha}} =
\Delta^{n-1} \bigl( \sqrt{\lambda} \bigr)^e\,
\mathrm{tr}\bigl( \pi(\alpha) \bigr),
\]
where \(\Delta := -\frac{1-\lambda q}{\sqrt{\lambda}(1-q)}\), \(\lambda := \frac{z+1-q}{qz}\), \(\alpha \in B_{1,n}\) is a word in the generators \(\sigma_i\) and \(t'_i\), \(\widehat{\alpha}\) is the closure of \(\alpha\), \(e\) is the exponent sum of the \(\sigma_i\) in \(\alpha\), and \(\pi\) is the canonical map \(B_{1,n} \to \mathrm{H}_{1,n}(q)\) defined by \(t\mapsto t\), \(\sigma_i\mapsto g_i\), is an invariant of oriented links in \(\mathrm{ST}\).
\end{thm}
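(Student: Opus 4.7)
The plan is to verify that the scalar $X_{\widehat{\alpha}}$ depends only on the ambient-isotopy class of $\widehat{\alpha}$ in $\mathrm{ST}$. By the Markov-type braid equivalence theorem for links in $\mathrm{ST}$ (i.e., Theorem~\ref{markov} without the braid band move~(iv)), it suffices to check invariance under conjugation $\alpha \mapsto \beta^{-1}\alpha\beta$ in $B_{1,n}$ and under positive and negative stabilizations $\alpha \mapsto \alpha\sigma_n^{\pm 1}$. Each of the three ingredients of $X$ --- the power $\Delta^{n-1}$, the factor $(\sqrt{\lambda})^e$, and the trace $\mathrm{tr}(\pi(\alpha))$ --- must be analyzed under both moves.

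Conjugation invariance is essentially automatic. The strand number $n$ does not change, and because the looping generator $t$ contributes $0$ to the exponent sum $e$ of the $\sigma_i$'s, the $\sigma_i$-contributions coming from $\beta^{-1}$ and $\beta$ cancel in $e$. Under the canonical map $\pi$, conjugation in $B_{1,n}$ descends to conjugation in $\mathrm{H}_{1,n}(q)$, so property~(1) of Theorem~\ref{tr} gives $\mathrm{tr}(\pi(\beta^{-1}\alpha\beta)) = \mathrm{tr}(\pi(\alpha))$. Loop conjugation by $t^{\pm 1}$ falls into this case.

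The substantive check is stabilization. For a positive stabilization one computes, using property~(3) of Theorem~\ref{tr},
\[
X_{\widehat{\alpha\sigma_n}} \;=\; \Delta^{n}(\sqrt{\lambda})^{e+1}\,\mathrm{tr}\bigl(\pi(\alpha)\,g_n\bigr) \;=\; \Delta^{n}(\sqrt{\lambda})^{e+1}\,z\,\mathrm{tr}(\pi(\alpha)),
\]
so comparing with $X_{\widehat{\alpha}} = \Delta^{n-1}(\sqrt{\lambda})^e\,\mathrm{tr}(\pi(\alpha))$ reduces invariance to the scalar identity $\Delta\,\sqrt{\lambda}\,z = 1$. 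Substituting $\lambda = (z+1-q)/(qz)$ gives $1-\lambda q = (q-1)/z$, whence $\Delta = -(q-1)/\bigl(z(1-q)\sqrt{\lambda}\bigr) = 1/(z\sqrt{\lambda})$, which confirms the identity. For a negative stabilization one first applies the quadratic Hecke relation to write $g_n^{-1} = q^{-1}g_n + (q^{-1}-1)$, obtaining
\[
\mathrm{tr}\bigl(\pi(\alpha)\,g_n^{-1}\bigr) \;=\; \bigl(q^{-1}z + q^{-1}-1\bigr)\mathrm{tr}(\pi(\alpha)) \;=\; \lambda\, z\,\mathrm{tr}(\pi(\alpha)),
\]
and the very same identity $\Delta\sqrt{\lambda}\,z = 1$ yields $X_{\widehat{\alpha\sigma_n^{-1}}} = X_{\widehat{\alpha}}$.

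The principal obstacle is precisely this simultaneous compatibility of the two stabilizations: the positive and negative moves must both be absorbed by the \emph{same} renormalizing scalar, and this is what forces the specific shapes of $\Delta$ and $\lambda$ in terms of $q$ and $z$. Once these algebraic identities are in place, $X$ is well-defined on $\mathcal{L}$ by induction along any finite Markov-equivalence sequence of braid representatives, completing the proof.
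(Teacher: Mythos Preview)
Your argument is correct and is exactly the standard verification that a normalized Markov trace yields a link invariant: check that conjugation leaves $n$, $e$, and $\mathrm{tr}$ unchanged (the latter by trace property~(1)), and that the scalar identity $\Delta\sqrt{\lambda}\,z=1$ absorbs both positive and negative stabilization via trace property~(3) and the quadratic relation. Your computations of $1-\lambda q=(q-1)/z$ and $q^{-1}z+q^{-1}-1=\lambda z$ are accurate, and you correctly observe that loop conjugation is a special case of ordinary conjugation in $B_{1,n}$.

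There is nothing to compare against in the paper itself: Theorem~\ref{inv} is stated there without proof, as a citation to~\cite{La1}. Your write-up supplies precisely the argument one would expect (and essentially the one given in~\cite{La1}), so it is entirely appropriate. One small point worth making explicit for completeness: when you invoke $\mathrm{tr}(\pi(\alpha)\cdot 1)=\mathrm{tr}(\pi(\alpha))$ in the negative-stabilization step, you are using that the trace is defined consistently on the tower $\mathcal{H}=\bigcup_n \mathrm{H}_{1,n}(q)$, which is built into Theorem~\ref{tr}; and when you say $e$ is unchanged under conjugation, you are implicitly using that each $t'_i$, expanded as $\sigma_i\cdots\sigma_1 t\,\sigma_1^{-1}\cdots\sigma_i^{-1}$, has $\sigma$-exponent sum zero, so the exponent sum $e$ is well-defined independently of whether one writes $\alpha$ in the $(t,\sigma_i)$ or $(t'_i,\sigma_i)$ generators.
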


\smallbreak

In the braid setting of \cite{La1}, the elements of \(\mathcal{S}(\mathrm{ST})\) correspond bijectively to the elements of the following set:
\begin{equation}\label{Lpr}
\Lambda' =
\bigl\{
t^{k_0} {t'_1}^{k_1} \cdots {t'_n}^{k_n} :
k_i\in\mathbb{Z}\setminus\{0\},\ k_i\geq k_{i+1}\ \forall i,\ n\in\mathbb{N}
\bigr\}.
\end{equation}

\noindent As explained in \cite{DL2}, \(\Lambda'\) forms a basis of \(\mathcal{S}(\mathrm{ST})\) in terms of braids and corresponds to the basis of \cite{HK}. Note that \(\Lambda'\) is a subset of \(\mathcal{H}\), and in particular of \(\Sigma'\). Unlike elements of \(\Sigma'\), the elements of \(\Lambda'\) have no gaps in the indices, their exponents are ordered, and they have no `braiding tails'.

\begin{remark}\rm
The Lambropoulou invariant \(X\) recovers \(\mathcal{S}(\mathrm{ST})\), as it assigns distinct values to distinct elements of \(\Lambda'\), since \(\mathrm{tr}\bigl(t^{k_0} {t'_1}^{k_1} \cdots {t'_n}^{k_n}\bigr) = s_{k_n}\cdots s_{k_1}s_{k_0}\).
\end{remark}

\section{A different basis \(\Lambda\) of \(\mathcal{S}({\rm ST})\)}\label{newbas}

In this section, we recall results from \cite{DL2, DLP, DL4} that are instrumental in the computation of \(\mathcal{S}(S^1\times S^2)\). For a survey on HOMFLYPT skein modules via braids, the reader is referred to \cite{DL3, DAMS, DGLM}.

\bigbreak

In \cite{DL2}, a different basis \(\Lambda\) for \(\mathcal{S}({\rm ST})\) was introduced, which is crucial for the computation of \(\mathcal{S}(S^1 \times S^2)\). This basis is described in Eq.~(\ref{basis}) in open braid form:

\begin{thm}[\cite{DL2}, Theorem~2] \label{newbasis}
The following set is a \(\mathbb{C}[q^{\pm1}, z^{\pm1}]\)-basis for \(\mathcal{S}({\rm ST})\):
\begin{equation}\label{basis}
\Lambda =
\bigl\{
t^{k_0} t_1^{k_1} \cdots t_n^{k_n} \,\big|\, k_i \in \mathbb{Z}\setminus\{0\},\ k_i \geq k_{i+1} \text{ for all } i,\ n \in \mathbb{N}
\bigr\}.
\end{equation}
\end{thm}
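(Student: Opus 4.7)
The plan is to exhibit $\Lambda$ as an invertible change of basis from the braid basis $\Lambda'$ of Eq.~(\ref{Lpr}). Since $\Lambda'$ is already established as a basis of $\mathcal{S}({\rm ST})$ (it corresponds via $\pi$ to the Hoste--Kidwell basis of Theorem~\ref{turaev}, and distinctness follows from the Markov trace formula $\mathrm{tr}(t^{k_0}{t_1'}^{k_1}\cdots {t_n'}^{k_n}) = s_{k_n}\cdots s_{k_0}$), it suffices to show that the natural map $\Lambda \hookrightarrow \mathcal{H}$ followed by projection to $\mathcal{S}({\rm ST})$ is expressed, in the ordered basis $\Lambda'$, by an upper-triangular matrix with unit diagonal entries.

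First, I would relate the two families of looping generators. Recall $t_i = g_i g_{i-1}\cdots g_1 t g_1 \cdots g_{i-1} g_i$ and $t_i' = g_i g_{i-1}\cdots g_1 t g_1^{-1}\cdots g_{i-1}^{-1} g_i^{-1}$. Using the quadratic Hecke relation in the form $g_j = g_j^{-1} + (q-1)$, I would substitute each positive $g_j$ in $t_i$ by $g_j^{-1} + (q-1)$ from the innermost factor outward. Collecting terms, one obtains inductively a formula of the shape
\[
t_i \;=\; t_i' \;+\; (q-1)\sum_{0 \leq j < i} A_{i,j}\, t_j'\, B_{i,j},
\]
where $A_{i,j}, B_{i,j}$ are words in the $g_k$ for $k \leq i$. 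This is the central algebraic identity (essentially Proposition~3 of \cite{DL2}), and its verification reduces to a careful induction on $i$ using the braid relation $g_1 t g_1 t = t g_1 t g_1$ together with the commutations $t g_j = g_j t$ for $j > 1$.

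Next, I would extend this to monomials. For a generic element $m = t^{k_0} t_1^{k_1}\cdots t_n^{k_n} \in \Lambda$, I would expand from the highest index downward: replace $t_n^{k_n}$ by its expansion and push the resulting braiding tails rightward using the identities above and the relations $t_j g_k = g_k t_j$ whenever $k$ is outside a prescribed range. Any trailing product of $g_k$'s left after the reduction lies in the Hecke subalgebra of type A and, at the level of the skein module, is absorbed via the trace relations of Theorem~\ref{tr} (specifically the $\mathrm{tr}(a g_n) = z\,\mathrm{tr}(a)$ rule, which geometrically is exactly the stabilization move). The outcome is
\[
m \;\equiv\; t^{k_0}{t_1'}^{k_1}\cdots {t_n'}^{k_n} \;+\; \sum_{m' \prec m} c_{m,m'}\, m' \pmod{\text{skein relations}},
\]
with $m' \in \Lambda'$ strictly smaller than $m$ in a well-chosen ordering.

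To make this precise, I would fix a total order on indexing tuples, for instance lexicographic on $\bigl(n,\; \sum_i |k_i|,\; |k_n|, |k_{n-1}|, \ldots, |k_0|\bigr)$ together with a sign tiebreaker. With respect to this order, the preceding expansion is strictly triangular with diagonal equal to $1$, so the change-of-basis matrix is invertible over $\mathbb{C}[q^{\pm1}, z^{\pm1}]$. Consequently $\Lambda$ spans $\mathcal{S}({\rm ST})$ and is linearly independent, hence a basis.

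The main obstacle I anticipate is Step 2: precisely tracking how the braiding tails produced by successive substitutions interact with the already-present looping elements, and verifying that every correction term does land strictly below $m$ in the chosen order. The subtlety is that the braid generators $g_1$ do \emph{not} commute with $t$, so one cannot naively push them off; each exchange costs a lower-order term that must be accounted for. Organizing the induction so that the order decreases monotonically at every step — and in particular, so that elements with gaps in their indices or out-of-order exponents are systematically rewritten into $\Lambda'$-form via conjugation and stabilization — is the technical heart of the argument.
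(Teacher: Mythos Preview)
Your approach is essentially the same as the one the paper cites from \cite{DL2}: relate $\Lambda$ and $\Lambda'$ by an infinite triangular change-of-basis matrix with invertible diagonal, using the identity between $t_i$ and $t_i'$ coming from the quadratic relation, and then cleaning up via conjugation and stabilization to land in the target set. Two small points of divergence are worth noting. First, the paper runs the computation in the opposite direction, starting from $\tau'\in\Lambda'$ and expressing it as $a\cdot\tau+\sum_i a_i\tau_i$ with $\tau_i<\tau$ in $\Lambda$; the diagonal coefficient is $q^{-\sum_i i k_i}$, not $1$, so your claim of unit diagonal should be weakened to ``diagonal a power of $q$'' (still invertible, so the argument survives). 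Second, the paper's ordering (Definition~\ref{order}) is led by the exponent sum $\sum_i k_i$ rather than by the index $n$ as in your proposal; this choice makes the change-of-basis matrix block-diagonal by level $k$, which is exploited heavily in the later computation of $\mathcal{S}(S^1\times S^2)$, whereas your ordering would also prove Theorem~\ref{newbasis} but would not give that block structure for free. The technical obstacle you flag in your final paragraph---controlling gaps, unordered exponents, and braiding tails simultaneously and showing the process terminates with strictly lower-order corrections---is exactly what \cite{DL2} isolates as Theorems~7--10 (summarized here as Theorem~\ref{gp}), so you have correctly identified where the real work lies.
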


The importance of the basis \(\Lambda\) lies in the fact that braid band moves, which extend link isotopy in \(\mathrm{ST}\) to link isotopy in \(S^1 \times S^2\), have a much simpler algebraic expression in this setting. This was the main motivation for introducing \(\Lambda\) (recall Theorem~\ref{markov}(iv)).

\begin{figure}[h]
\begin{center}
\includegraphics[width=1.8in]{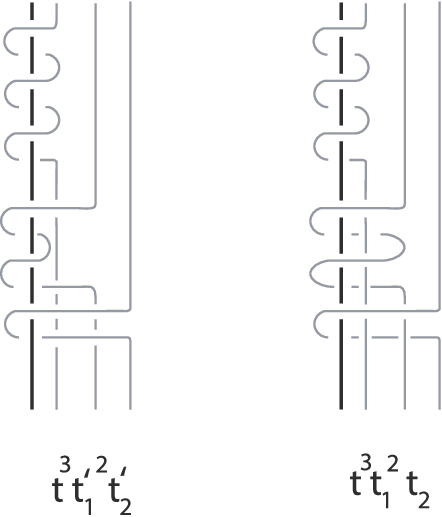}
\end{center}
\caption{Elements in the two different bases of \(\mathcal{S}({\rm ST})\).}
\label{basel}
\end{figure}

Comparing \(\Lambda\) to the linear basis \(\Sigma\) of \({\rm H}_{1,n}(q)\) (recall Eq.~\ref{sigman}), we observe that in \(\Lambda\) there are no gaps in the indices of the \(t_i\)'s, the exponents are in decreasing order, and no `braiding tails' are present.

\bigbreak

The proof of Theorem~\ref{newbasis} in \cite{DL2} proceeds as follows:
\begin{itemize}
\item[-] Define total orderings on the sets \(\Lambda'\) and \(\Lambda\);
\item[-] Show that these ordered sets are related via a lower-triangular infinite matrix with invertible diagonal entries;
\item[-] Conclude linear independence of \(\Lambda\) using this matrix.
\end{itemize}

More precisely, to relate \(\Lambda'\) and \(\Lambda\) via this matrix, one starts with elements of the standard basis \(\Lambda'\) and rewrites them as sums of elements in the linear bases \(\Sigma_n\) of \({\rm H}_{1,n}(q)\), that is, products of arbitrary monomials in \(t_i\)'s followed by `braiding tails' in \({\rm H}_n(q)\). These are then converted into elements of \(\Lambda\) through the following steps:
\begin{itemize}
\item[-] managing gaps in the indices of the \(t_i\)'s,
\item[-] ordering the exponents of the \(t_i\)'s,
\item[-] eliminating the `braiding tails'.
\end{itemize}

It is worth noting that these procedures are interdependent: for instance, managing gaps may introduce braiding tails or alter the exponents, and eliminating braiding tails may create gaps. Nevertheless, as shown in \cite{DL2}, this iterative process eventually terminates, leaving elements in the desired form of \(\Lambda\).

\subsection{An ordering in the bases of \(\mathcal{S}({\rm ST})\)}

In \cite{DL2}, an ordering relation is defined on the sets \(\Sigma\), \(\Sigma^{\prime}\), \(\Lambda\), and \(\Lambda^{\prime}\), which plays an important role in this paper. Before presenting this ordering, we introduce the notion of the \textit{index} of a word \(w\), denoted \(ind(w)\), in any of these sets.

\begin{defn}[\cite{DL2}, Definition~1] \rm 
Let \(w\) be a word in \(\Lambda^{\prime}\) or \(\Lambda\). Then \(ind(w)\) is defined as the highest index among the \(t_i^{\prime}\)'s (respectively \(t_i\)'s) appearing in \(w\). Similarly, for \(w\) in \(\Sigma^{\prime}\) or \(\Sigma\), \(ind(w)\) is defined by ignoring possible gaps in the indices of the looping generators and by ignoring the braiding parts in the algebras \(\mathrm{H}_n(q)\). Moreover, the index of a monomial in \(\mathrm{H}_n(q)\) is equal to \(0\).
\end{defn}

We now present the ordering relation on \(\Sigma^{\prime}\), which induces corresponding orderings on its subsets \(\Lambda\) and \(\Lambda^{\prime}\).

\begin{defn}[\cite{DL2}, Definition~2] \label{order} \rm
Let \(w={t^{\prime}_{i_1}}^{k_1}\cdots {t^{\prime}_{i_{\mu}}}^{k_{\mu}} \cdot \beta_1\) and \(u={t^{\prime}_{j_1}}^{\lambda_1}\cdots {t^{\prime}_{j_{\nu}}}^{\lambda_{\nu}} \cdot \beta_2\) be elements of \(\Sigma^{\prime}\), where \(k_t, \lambda_s \in \mathbb{Z}\) for all \(t,s\), and \(\beta_1, \beta_2 \in H_n(q)\). Then we define \(w < u\) if any of the following hold:

\begin{itemize}
\item[(a)] \(\sum_{i=0}^{\mu} k_i < \sum_{i=0}^{\nu} \lambda_i\);

\item[(b)] If \(\sum_{i=0}^{\mu} k_i = \sum_{i=0}^{\nu} \lambda_i\), then:
\begin{itemize}
\item[(i)] if \(ind(w) < ind(u)\), then \(w < u\);
\item[(ii)] if \(ind(w) = ind(u)\), then:
\begin{itemize}
\item[($\alpha$)] if \(i_1=j_1, \ldots, i_{s-1}=j_{s-1}, i_s<j_s\), then \(w>u\);
\item[($\beta$)] if \(i_t=j_t\) for all \(t\) and \(k_{\mu}=\lambda_{\mu}, \ldots, k_{i+1}=\lambda_{i+1}, |k_i|<|\lambda_i|\), then \(w<u\);
\item[($\gamma$)] if \(i_t=j_t\) for all \(t\) and \(k_{\mu}=\lambda_{\mu}, \ldots, k_{i+1}=\lambda_{i+1}, |k_i|=|\lambda_i|\) and \(k_i>\lambda_i\), then \(w<u\);
\item[($\delta$)] if \(i_t=j_t\) and \(k_i=\lambda_i\) for all \(i\), then \(w=u\).
\end{itemize}
\end{itemize}
\end{itemize}

The ordering on \(\Sigma\) is defined analogously, replacing \(t_i^{\prime}\) with \(t_i\).
\end{defn}

\bigbreak

We also define the \textit{subsets of level \(k\)}, \(\Lambda_{(k)}\) and \(\Lambda^{\prime}_{(k)}\), of \(\Lambda\) and \(\Lambda^{\prime}\) respectively (\cite{DL2}, Definition~3):

\begin{equation}
\begin{array}{l}
\Lambda_{(k)} := \bigl\{t_0^{k_0} t_1^{k_1} \cdots t_m^{k_m} \,\big|\, \sum_{i=0}^m k_i = k,\ k_i \neq 0,\ k_i \geq k_{i+1} \ \forall i \bigr\} \\[1ex]
\Lambda^{\prime}_{(k)} := \bigl\{{t^{\prime}_0}^{k_0} {t^{\prime}_1}^{k_1} \cdots {t^{\prime}_m}^{k_m} \,\big|\, \sum_{i=0}^m k_i = k,\ k_i \neq 0,\ k_i \geq k_{i+1} \ \forall i \bigr\}.
\end{array}
\end{equation}

\noindent In \cite{DL2}, it is shown that the sets \(\Lambda_{(k)}\) and \(\Lambda^{\prime}_{(k)}\) are totally ordered and well-ordered for all \(k\) (\cite[Propositions~1 \& 2]{DL2}).

\bigbreak

Finally, we define the augmented set \(\Lambda^{\text{aug}}\) and its level-\(k\) subsets \(\Lambda^{\text{aug}}_{(k)}\):

\begin{equation}\label{lamaug}
\Lambda^{\text{aug}} := \bigl\{t_0^{k_0} t_1^{k_1} \cdots t_n^{k_n} \,\big|\, k_i \neq 0 \bigr\},
\end{equation}

\begin{equation}\label{lamaug2}
\Lambda^{\text{aug}}_{(k)} := \bigl\{t_0^{k_0} t_1^{k_1} \cdots t_m^{k_m} \,\big|\, \sum_{i=0}^m k_i = k,\ k_i \neq 0 \bigr\}.
\end{equation}

These sets, introduced in \cite{DLP}, play a key role in the computation of \(\mathcal{S}(L(p,1))\) in \cite{DL4} and \cite{D1}.

\subsection{The infinite change of basis matrix: from \(\Lambda^{\prime}\) to \(\Lambda\)}

In order to relate the two sets \(\Lambda^{\prime}\) and \(\Lambda\) via an infinite lower-triangular matrix, we start by converting monomials in the \(t^{\prime}_i\)'s into expressions involving the \(t_i\)'s. To simplify these expressions, we first introduce the following notation:

\begin{nt}\label{nt} \rm
We set \(\tau_{i,i+m}^{k_{i,i+m}} := t_i^{k_i} t_{i+1}^{k_{i+1}} \cdots t_{i+m}^{k_{i+m}}\) and \({\tau^{\prime}}_{i,i+m}^{k_{i,i+m}} := {t^{\prime}}_i^{k_i} {t^{\prime}}_{i+1}^{k_{i+1}} \cdots {t^{\prime}}_{i+m}^{k_{i+m}}\), where \(m \in \mathbb{N}\) and \(k_j \neq 0\) for all \(j\).
\end{nt}

We now introduce the notion of \textit{homologous words}, which is crucial for expressing elements of \(\Lambda^{\prime}\) in terms of \(\Lambda\) through a triangular matrix.

\begin{defn}[\cite{DL2}, Definition~4] \rm
We say that two words \(w^{\prime} \in \Lambda^{\prime}\) and \(w \in \Lambda\) are \textit{homologous}, denoted \(w^{\prime} \sim w\), if \(w\) is obtained from \(w^{\prime}\) by replacing each \(t^{\prime}_i\) with \(t_i\), for all \(i\).
\end{defn}

As shown in \cite{DL2}, every element of \(\Lambda^{\prime}\) can be expressed as a sum of elements of \(\Sigma\) of lower order than the homologous word (see \cite[Theorem~7]{DL2}). More precisely:

\[
{\tau^{\prime}}_{0,m}^{k_{0,m}}  =
q^{- \sum_{n=1}^{m} n k_n} \cdot {\tau}_{0,m}^{k_{0,m}}
\ +\ \sum_{i} f_i(q) \cdot {\tau}_{0,m}^{k_{0,m}} w_i
\ +\ \sum_{j} g_j(q) \tau_j u_j,
\]

where \(w_i, u_j \in \mathrm{H}_{m+1}(q)\), each \(\tau_j\) is a monomial in the \(t_i\)'s such that \(\tau_j < \tau_{0,m}^{k_{0,m}}\), and \(f_i, g_j \in \mathbb{C}\).

\smallbreak

When converting \(\tau^{\prime} \in \Lambda^{\prime}\) into a sum of elements of \(\Sigma\), one obtains the homologous word \(\tau\) together with terms that are not elements of \(\Lambda\) (e.g., terms with gaps in the indices, unordered exponents, or braiding parts). As shown in \cite{DL2}, by applying conjugation and stabilization moves, these terms can be further reduced to sums of elements of \(\Lambda\) of strictly lower order.

We use the notation \(\widehat{=}\) to denote an equality up to conjugation, \(\simeq\) for stabilization, and \(\widehat{\simeq}\) when both conjugation and stabilization have been applied.

\begin{thm}[\cite{DL2}, Theorems~7, 8, 9, 10] \label{gp}
Let \(\tau^{\prime} \in \Lambda^{\prime}\). Then, applying conjugation and stabilization moves yields:
\[
\tau^{\prime} \ \widehat{\simeq} \ a \cdot \tau\ +\ \sum_{i} a_i \cdot \tau_i,
\]
where \(\tau \sim \tau^{\prime}\), each \(\tau_i \in \Lambda\) satisfies \(\tau_i < \tau\), and \(a, a_i \in \mathbb{C}\).
\end{thm}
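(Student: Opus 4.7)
The plan is to begin with the expansion of $\tau^{\prime}$ in terms of $\Sigma$-basis elements, which is obtained by substituting the definition $t_i^{\prime} = g_i\cdots g_1\, t\, g_1^{-1}\cdots g_i^{-1}$ and expanding the $g_i^{-1}$'s via the Hecke quadratic relation $g_i^{-1} = q^{-1}g_i + (q^{-1}-1)$. After normal-ordering, this produces a leading term equal to $q^{-\sum_{n=1}^{m} nk_n}$ times the homologous monomial $\tau$, together with correction terms that either (i) have the same looping part as $\tau$ but carry a non-trivial braiding tail $w_i \in \mathrm{H}_{m+1}(q)$, or (ii) have a looping part of strictly lower order than $\tau$ in the sense of Definition~\ref{order}, possibly with an accompanying tail. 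This step is essentially the displayed identity stated just before Theorem~\ref{gp} in the excerpt and fixes the coefficient $a = q^{-\sum_{n=1}^{m} nk_n}$.

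The core of the argument is then to absorb the non-$\Lambda$ corrections one by one by conjugation and stabilization. I would run an outer induction on the ordering of Definition~\ref{order}, proving the statement for all $\tau^{\prime}$ whose homologous $\tau$ lies in a fixed level $\Lambda_{(k)}$ by descending on $\tau$. For each correction term, three obstructions can occur: a braiding tail, a gap in the indices of the looping generators, and an exponent sequence that is not weakly decreasing. My plan is to clear them in the following order. First, push every braiding tail $w_i$ to the rightmost moving strand by conjugation (the looping part is affected only by lower-order corrections because of $t\sigma_j = \sigma_j t$ for $j>1$) and eliminate it using a stabilization move, which absorbs at most a scalar factor. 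Next, use conjugation by appropriate $\sigma_j$'s to swap neighbouring looping generators into a weakly decreasing sequence, where each swap produces only strictly lower-order correction terms via the braid relation $\sigma_1 t \sigma_1 t = t\sigma_1 t \sigma_1$ and the Hecke quadratic relation. Finally, fill in any remaining gaps via a combined conjugation and stabilization, which again produces only lower-order terms.

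The main obstacle, flagged already in the discussion preceding the theorem, is the interdependence of these three steps: eliminating a tail can reintroduce a gap, filling a gap can disorder the exponents, and reordering the exponents can produce a new tail. Handling this requires checking that every such interaction strictly decreases the word in the total ordering of Definition~\ref{order}, so that the inductive hypothesis applies to the newly produced terms. Since $\Lambda_{(k)}$ has been shown to be well-ordered in \cite[Proposition~2]{DL2}, the induction terminates after finitely many iterations, and collecting scalar coefficients yields the asserted expansion $\tau^{\prime}\ \widehat{\simeq}\ a\cdot \tau + \sum_i a_i\cdot \tau_i$ with $a, a_i\in\mathbb{C}$ and each $\tau_i < \tau$ in $\Lambda$. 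Verifying the strict-decrease claim at each step is where the detailed braid-algebraic computations of \cite[Theorems~8, 9, 10]{DL2} would be invoked rather than reproduced in full.
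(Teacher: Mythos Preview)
Your proposal matches the approach the paper itself summarizes immediately before the theorem: expand $\tau'$ into $\Sigma$ via the displayed identity (giving the leading coefficient $a = q^{-\sum_{n=1}^{m} nk_n}$), then iteratively manage gaps, reorder exponents, and eliminate braiding tails by conjugation and stabilization, with termination guaranteed by the well-ordering of $\Lambda_{(k)}$. The paper does not supply a self-contained proof here but cites \cite[Theorems~7--10]{DL2}, so your sketch is at the same level of detail and follows the same route; the only cosmetic difference is that you process the three obstructions in the order tails--exponents--gaps rather than the paper's listed gaps--exponents--tails, which is immaterial given the acknowledged interdependence.
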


With the ordering defined in Definition~\ref{order}, it follows that the infinite matrix converting elements of \(\Lambda^{\prime}\) into elements of \(\Lambda\) is block diagonal, where each block is an infinite lower-triangular matrix with invertible diagonal entries. Denoting by \(M_k\) the block matrix acting on the level-\(k\) components \(\Lambda^{\prime}_{(k)}\) and \(\Lambda_{(k)}\), the full change of basis matrix \(M\) has the form:
\[
M =
\begin{bmatrix}
\ddots & 0 & 0 & 0 & 0 & 0 & \\
& M_{k-2} & 0 & 0 & 0 & 0 & \\
& 0 & M_{k-1} & 0 & 0 & 0 & \\
& 0 & 0 & M_k & 0 & 0 & \\
& 0 & 0 & 0 & M_{k+1} & 0 & \\
& 0 & 0 & 0 & 0 & M_{k+2} & \\
& 0 & 0 & 0 & 0 & 0 & \ddots
\end{bmatrix}
\]

\begin{center}
\textit{The infinite block-diagonal change of basis matrix}
\end{center}

\begin{remark}\label{rem2}\rm
\begin{itemize}
\item[(i)] The inverse of \(M\), \(M^{-1}\), is also block diagonal, with each block being an infinite upper-triangular matrix with invertible diagonal entries. Therefore, an element \(\tau^{\prime} \in \Lambda^{\prime}\) can be expressed as a linear combination of its homologous word \(\tau \in \Lambda\) and elements of \(\Lambda\) of lower order.
\item[(ii)] Combining only Theorems~7, 8, and 10 of \cite{DL2} shows that \(\tau^{\prime} \in \Lambda^{\prime}\) can be written as a sum of elements of \(\Lambda^{\text{aug}}\), that is, monomials in the \(t_i\)'s without gaps in the indices and with possibly unordered exponents.
\end{itemize}
\end{remark}

\section{The HOMFLYPT skein module of \(S^1 \times S^2\) via braids}\label{HOMS}

In this section, we compute the HOMFLYPT skein module of \(S^1 \times S^2\) using braid techniques. Specifically, we solve the infinite system of equations obtained by applying braid band moves (bbm's) on the first moving strand of elements in the augmented set \(\Lambda^{\text{aug}}\).

\subsection{Relating \(\mathcal{S}(S^1\times S^2)\) to \(\mathcal{S}({\rm ST})\)}

As explained in \cite{DLP}, to compute \(\mathcal{S}(L(p, 1))\), the invariant \(X\) must be normalized to satisfy all possible braid band moves. In particular, \cite{DLP} shows that it suffices to consider elements of the basic set \(\Lambda\) and to perform bbm's on all their moving strands. That is:
\[
X_{\widehat{\tau}} = X_{\widehat{bbm_i(\tau)}}, \quad \forall\, \tau \in \Lambda,
\]
where \(bbm_i(\tau)\) denotes the result of performing a bbm on the \(i^\text{th}\) moving strand of \(\tau \in \Lambda\).

\smallskip

In \cite{DL4} it is shown that, in fact, it suffices to perform bbm's only on the \textit{first} moving strand of elements of the augmented set \(\Lambda^{\text{aug}}\). The methods of \cite{DLP, DL4} apply equally well in the case \(L(0,1) \cong S^1 \times S^2\), leading to the following result:

\begin{thm}\label{dlnew}
The solution of the infinite system
\[
X_{\widehat{\tau}} = X_{\widehat{bbm_1(\tau)}}, \quad \forall\, \tau \in \Lambda^{\text{aug}},
\]
where \(bbm_1(\tau)\) denotes the result of performing a bbm on the first moving strand of \(\tau \in \Lambda^{\text{aug}}\), is equivalent to finding a basis for the HOMFLYPT skein module of \(S^1 \times S^2\).
\end{thm}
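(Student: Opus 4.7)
The plan is to translate the topological characterization of isotopy in Theorem~\ref{markov} into an algebraic invariance statement for the Lambropoulou invariant $X$, and then reduce the resulting (a priori very large) system of equations to the one displayed. The overall strategy is exactly that of \cite{DLP, DL4} for $L(p,1)$ with $p \neq 0$; the content of the present theorem is that no step of that argument uses the hypothesis $p \neq 0$ in an essential way.

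First, I would note that $X$ is defined at the level of mixed braids and descends to an invariant of links in $\mathrm{ST}$, so by construction it already respects moves (i)--(iii) of Theorem~\ref{markov}. Consequently, extending $X$ to an invariant of oriented links in $S^1 \times S^2$ is equivalent to imposing the braid band move condition $X_{\widehat{\alpha}} = X_{\widehat{bbm_{\pm i}(\alpha)}}$ for every mixed braid $\alpha$ and every strand $i$. Since $\Lambda$ is a basis of $\mathcal{S}(\mathrm{ST})$, linearity reduces this to the system indexed by $\tau \in \Lambda$ with $i$ ranging over the moving strands of $\tau$. Dually, a basis of $\mathcal{S}(S^1 \times S^2)$ is precisely a maximal collection of elements on which such an $X$ can be chosen to take independent values, so the computation of $\mathcal{S}(S^1 \times S^2)$ amounts to solving this system.

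The two-fold reduction from \cite{DLP, DL4} then proceeds as follows. First, I would show that a bbm performed on the $i$-th moving strand can be transported to the first moving strand by a sequence of conjugations and stabilizations, moves which $X$ already respects; this eliminates all strands $i \geq 2$ from the system. Second, I would replace $\Lambda$ by the augmented set $\Lambda^{\text{aug}}$ of Eq.~(\ref{lamaug}). By Remark~\ref{rem2}(ii), elements of $\Lambda^{\prime}$, and hence of $\mathcal{S}(\mathrm{ST})$, can be written as linear combinations of elements of $\Lambda^{\text{aug}}$ via conjugation and stabilization alone, so invariance of $X$ on $\Lambda^{\text{aug}}$ is equivalent to invariance on $\Lambda$. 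Working with $\Lambda^{\text{aug}}$ is decisive for tractability: the bbm on the first strand of a monomial $t_0^{k_0} t_1^{k_1} \cdots t_n^{k_n} \in \Lambda^{\text{aug}}$ admits a clean algebraic expression without the complications caused by the index-ordering or gap-filling constraints that are built into $\Lambda$.

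The main obstacle will be verifying that the two reductions of \cite{DLP, DL4}, developed for the lens spaces $L(p,1)$ with $p \neq 0$, carry over without modification to the surgery coefficient $p = 0$. In particular, one must check that the strand-reduction argument never divides by an expression that vanishes when $p = 0$, and that the augmented-set reduction depends only on the structure of $B_{1,n}$, on the Markov trace of Theorem~\ref{tr}, and on the change-of-basis matrix between $\Lambda^{\prime}$ and $\Lambda$, all of which are independent of $p$. I expect this to be a careful inspection of the arguments of \cite{DLP, DL4} rather than a genuinely new computation, after which the stated equivalence follows.
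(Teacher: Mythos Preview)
Your proposal is correct and follows essentially the same approach as the paper: both defer the substance of the argument to \cite{DLP, DL4} and observe that nothing in those reductions depends on the surgery coefficient $p$ being nonzero, so the case $p=0$ goes through verbatim. The paper's own proof is in fact even more terse than yours, consisting of a single sentence to this effect together with a diagram recording the chain of equalities
\[
\frac{\mathcal{S}({\rm ST})}{\langle a - bbm_i(a)\rangle}\ =\ \cdots\ =\ \frac{\mathcal{S}({\rm ST})}{\langle \tau - bbm_1(\tau)\rangle},\quad \tau\in\Lambda^{\text{aug}},
\]
passing through the intermediate sets $\Sigma'_n$, $\Sigma_n$, $\Lambda^{\text{aug}}|{\rm H}_n$, and $\Lambda|{\rm H}_n$; your two-step summary (reduce strand index, then pass to $\Lambda^{\text{aug}}$) is a reasonable compression of this chain.
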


The proof of Theorem~\ref{dlnew} follows the same steps as in the case of \(L(p,1)\), with the only difference being the form of the bbm's, which do not affect the overall argument (see \cite{DLP, DL4} for details). The steps are summarized in the following diagram:
\[
\begin{array}{llllll}
\mathcal{S}(S^1 \times S^2) & = & \displaystyle\frac{\mathcal{S}({\rm ST})}{\langle a - bbm_i(a) \rangle},\ a\in B_{1,n},\ \forall\, i
& = & \displaystyle\frac{\mathcal{S}({\rm ST})}{\langle s^{\prime} - bbm_1(s^{\prime}) \rangle},\ s^{\prime}\in \Sigma_n^{\prime} \\[12pt]

& = & \displaystyle\frac{\mathcal{S}({\rm ST})}{\langle s - bbm_1(s) \rangle},\ s\in \Sigma_n
& = & \displaystyle\frac{\mathcal{S}({\rm ST})}{\langle \lambda^{\prime} - bbm_i(\lambda^{\prime}) \rangle},\ \lambda^{\prime} \in \Lambda^{\text{aug}}|{\rm H}_n,\ \forall\, i \\[12pt]

& = & \displaystyle\frac{\mathcal{S}({\rm ST})}{\langle \lambda^{\prime\prime} - bbm_i(\lambda^{\prime\prime}) \rangle},\ \lambda^{\prime\prime}\in \Lambda|{\rm H}_n,\ \forall\, i
& = & \displaystyle\frac{\mathcal{S}({\rm ST})}{\langle \tau - bbm_1(\tau) \rangle},\ \tau\in \Lambda^{\text{aug}}.
\end{array}
\]

\noindent Here, \(\Lambda^{\text{aug}}|{\rm H}_n\) denotes the \({\rm H}_n(q)\)-module \(\Lambda^{\text{aug}}\), consisting of monomials in the \(t_i\)'s (without gaps in indices and possibly unordered exponents) followed by `braiding tails' in \({\rm H}_n\). The difference between \(\Lambda^{\text{aug}}|{\rm H}_n\) and \(\Lambda|{\rm H}_n\) is that in the latter the exponents are ordered.

\subsection{The infinite system}\label{sys}

We now simplify the equations in the infinite system and show that the unknowns commute. The key result of this subsection is that the system splits into infinitely many, self-contained subsystems.

\begin{lemma}
Let \(\tau_{0,m}^{k_{0,m}} \in \Lambda^{\text{aug}}_k\), with \(\sum_{i=0}^{m}k_i=k\). Then the system
\[
\left\{
\begin{array}{lll}
X_{\widehat{\tau_{0,m}^{k_{0,m}}}} & = & X_{\widehat{\tau_{1,m+1}^{k_{0,m}}\sigma_1}} \\[6pt]
X_{\widehat{\tau_{0,m}^{k_{0,m}}}} & = & X_{\widehat{\tau_{1,m+1}^{k_{0,m}}\sigma_1^{-1}}}
\end{array}
\right.
\]
is equivalent to
\[
\begin{array}{lll}
\mathrm{tr}(\tau_{0,m}^{k_{0,m}}) & = & \displaystyle\frac{1}{z}\cdot \sqrt{\lambda}^{\,e_2-e_1-1}\cdot \mathrm{tr}(\tau_{1,m+1}^{k_{0,m}}\cdot \sigma_1), \\[6pt]
\mathrm{tr}(\tau_{0,m}^{k_{0,m}}) & = & \displaystyle\frac{1}{z}\cdot \sqrt{\lambda}^{\,e_2-3}\cdot \mathrm{tr}(\tau_{1,m+1}^{k_{0,m}}\cdot \sigma_1^{-1}),
\end{array}
\]
where \(e_1=\sum_{i=1}^{m}2i k_i\) and \(e_2=\sum_{i=1}^{m+1}(2i\,k_{i-1}+1)\).
\end{lemma}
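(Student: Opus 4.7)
The plan is to prove this lemma by direct computation from the definition of the invariant $X$ given in Theorem~\ref{inv}. Applying $X_{\widehat{\alpha}} = \Delta^{n-1} (\sqrt{\lambda})^{e}\, \mathrm{tr}(\pi(\alpha))$ to both sides of each equation in the system, I will extract relations between the Markov traces of $\tau_{0,m}^{k_{0,m}}$ and $\tau_{1,m+1}^{k_{0,m}} \sigma_1^{\pm 1}$. The only real ingredients are a careful strand count, a careful $\sigma$-exponent count, and an explicit simplification of $\Delta$.

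First I would count the number of moving strands on each side: the LHS braid $\tau_{0,m}^{k_{0,m}} = t_0^{k_0} t_1^{k_1}\cdots t_m^{k_m}$ lives in $B_{1,m+1}$, so its contribution from $\Delta$ is $\Delta^{m}$, whereas the bbm pushes the braid up one strand, so $\tau_{1,m+1}^{k_{0,m}}\sigma_1^{\pm 1}$ lives in $B_{1,m+2}$ and contributes $\Delta^{m+1}$. Next, since each looping element $t_i = \sigma_i \cdots \sigma_1 t \sigma_1 \cdots \sigma_i$ contains exactly $2i$ $\sigma$-generators, the $\sigma$-exponent sum of $\tau_{0,m}^{k_{0,m}}$ is $e_1 = \sum_{i=1}^{m} 2i k_i$, while the $\sigma$-exponent sum of $\tau_{1,m+1}^{k_{0,m}}\sigma_1^{\pm 1}$ is $\sum_{i=1}^{m+1} 2i k_{i-1} \pm 1$, which is the quantity denoted $e_2$ (up to the $\pm 1$ handled separately in the two cases).

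I then would simplify $\Delta$ using $\lambda = (z+1-q)/(qz)$. A short calculation gives $1 - \lambda q = (q-1)/z$, hence
\[
\Delta \;=\; -\frac{1-\lambda q}{\sqrt{\lambda}(1-q)} \;=\; \frac{1}{z\,\sqrt{\lambda}}.
\]
Substituting this identity and the exponent counts above into $X_{\widehat{\tau_{0,m}^{k_{0,m}}}} = X_{\widehat{\tau_{1,m+1}^{k_{0,m}} \sigma_1^{\pm 1}}}$ and cancelling the common factor $\Delta^m (\sqrt{\lambda})^{e_1}$ against $\Delta^{m+1}(\sqrt{\lambda})^{e_{\pm}}$, the extra factor of $\Delta = 1/(z\sqrt{\lambda})$ produces the prefactor $\tfrac{1}{z}(\sqrt{\lambda})^{-1}$, which after absorbing into the remaining power of $\sqrt{\lambda}$ yields precisely $\tfrac{1}{z}(\sqrt{\lambda})^{e_2 - e_1 - 1}$ in the positive case and the corresponding exponent in the negative case. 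Splitting into the two sign choices then gives the two equations in the lemma.

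I do not anticipate a genuine obstacle here: the entire argument is a bookkeeping computation built on Theorem~\ref{inv} and the explicit form of $\Delta$. The only point requiring care is the index shift in $\tau_{1,m+1}^{k_{0,m}}$, where one must correctly re-index the exponents as $k_{i-1}$ (not $k_i$) when computing the $\sigma$-exponent contribution of each shifted looping generator $t_{i+1}^{k_i}$; once that bookkeeping is set straight, equating the two occurrences of $X$ and dividing out the common factors completes the proof.
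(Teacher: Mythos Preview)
Your proposal is correct and follows essentially the same approach as the paper's own proof: both compute $X$ on each side directly from Theorem~\ref{inv}, use the simplification $\Delta = 1/(z\sqrt{\lambda})$ (the paper writes this as the factor $\bigl(\tfrac{1}{\sqrt{\lambda}\,z}\bigr)^{n-1}$), count the $\sigma$-exponent sums $e_1$ and $e_2$ exactly as you do, and then equate and cancel common factors to obtain the stated trace relations. Your bookkeeping on the strand count, the index shift $k_i \mapsto k_{i-1}$, and the handling of the $\pm 1$ from $\sigma_1^{\pm 1}$ all match the paper's computation.
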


\begin{proof}
The proof follows by computing \(X_{\widehat{\tau}}\) for \(\tau=\tau_{0,m}^{k_{0,m}}\), performing a bbm on the first moving strand, and comparing exponents of \(\sqrt{\lambda}\) and \(z\). Details are identical to the analogous computation for \(L(p,1)\) in \cite{DL4}, but we include them below since the case $p=0$ has some subtle differences worth clarifying.

Let $\tau=\tau_{0,m}^{k_{0,m}}$. Then $X_{\widehat{\tau}}=\left(\frac{1}{\sqrt{\lambda}\cdot z} \right)^m\cdot \sqrt{\lambda}^{e_1}\cdot tr(\tau)$, where $e_1=\sum_{i=1}^{m}{2\cdot i\cdot k_i}$. We perform a {\it bbm} on the first moving strand of $\tau$ and obtain $\tau_+\cdot \sigma_1^{\pm 1}$, where $\tau_{+}=\tau_{1, m+1}^{k_{0,m}}:=t_1^{k_0}t_1^{k_1}\ldots t_{m+1}^{k_m}$. Moreover:

\[
\begin{array}{llll}
X_{\widehat{\tau_+\cdot \sigma_1}} & = & \left(\frac{1}{\sqrt{\lambda}\cdot z} \right)^{m+1}\cdot \sqrt{\lambda}^{\ e_2}\cdot tr(\tau_+\cdot \sigma_1), &\ \ \ e_2= \sum_{i=1}^{m+1}{2\cdot i\cdot k_{i-1}+1}  \\
X_{\widehat{\tau_+\cdot \sigma_1^{-1}}} & = & \left(\frac{1}{\sqrt{\lambda}\cdot z} \right)^{m+1}\cdot \sqrt{\lambda}^{\ e_3}\cdot tr(\tau_+\cdot \sigma_1^{-1}), &\ \ \ e_3= e_2-2
\end{array}
\]

\[
\begin{array}{lllll}
X_{\widehat{\tau}} & = & X_{\widehat{\tau_+\cdot \sigma_1}} & \Rightarrow &  tr(\tau)=\left(\frac{1}{z} \right)\cdot \sqrt{\lambda}^{\ e_2-e_1-1}\cdot tr(\tau_+\cdot \sigma_1)\\
X_{\widehat{\tau}} & = & X_{\widehat{\tau_+\cdot \sigma_1^{-1}}} & \Rightarrow & tr(\tau)=\left(\frac{1}{z} \right)\cdot \sqrt{\lambda}^{\ e_3-e_1-3}\cdot tr(\tau_+\cdot \sigma_1^{-1})
\end{array}
\]

\end{proof}

As shown in \cite{DL4}, the unknowns \(s_1, s_2, \ldots\) of the infinite system commute.

\begin{prop}
The unknowns \(s_1, s_2, \ldots\) of the infinite system of equations commute.
\end{prop}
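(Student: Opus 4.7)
The plan is to deduce commutativity directly from the target ring of the Lambropoulou trace. By Theorem~\ref{tr}, the trace $\mathrm{tr}:\mathcal{H}\to R(z,s_k)$ takes values in the commutative polynomial ring $R(z,s_k)$, so the $s_k$'s are, by construction, commuting indeterminates in the ambient ring.

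Each equation $X_{\widehat{\tau}}=X_{\widehat{bbm_1(\tau)}}$ of the infinite system arises, following the commutative diagram in the outline, by first expanding $\tau\in\Lambda^{\text{aug}}$ via the inverse of the change of basis matrix of Remark~\ref{rem2} as a linear combination of elements of $\Lambda'$, then applying $\mathrm{tr}$ and multiplying by the appropriate powers of $\sqrt{\lambda}$ and $\Delta$. The result is a polynomial identity in the commuting variables $s_k$ with coefficients in $\mathbb{C}[q^{\pm 1},z^{\pm 1},\sqrt{\lambda}^{\pm 1}]$. The entire system thus generates an ideal $I$ in the commutative ring $R(z,s_k)$, and solving it amounts to passing to the quotient $R(z,s_k)/I$, which remains commutative. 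Hence the images of $s_i$ and $s_j$ commute for every $i,j\in\mathbb{Z}$.

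The main technical point to verify carefully is that the passage from $\Lambda^{\text{aug}}$ to $\Lambda'$ (involving gap management, exponent reordering, and elimination of braiding tails, as in the proof of Theorem~\ref{gp}) does not introduce any hidden non-commutative structure through the bookkeeping. The block-diagonal form of the change of basis matrix $M$, with each block $M_k$ acting only on the level-$k$ components, confines every expansion to polynomials in the variables $\{s_{k_0},\ldots,s_{k_n}\}$ with $\sum k_i=k$, so the entire analysis stays within a well-controlled commutative polynomial regime, from which the proposition follows immediately. A parallel hands-on verification, mirroring the arguments of \cite{DL4}, would proceed by tracking how pairs of monomials such as $t_0^a t_1^b$ and $t_0^b t_1^a$ in $\Lambda^{\text{aug}}_{a+b}$ reduce, via the cyclic property $\mathrm{tr}(xy)=\mathrm{tr}(yx)$ and the Markov rule $\mathrm{tr}(a(t'_n)^k)=s_k\,\mathrm{tr}(a)$, to expressions that agree as polynomials in the commuting $s_k$'s, thereby confirming the conclusion concretely.
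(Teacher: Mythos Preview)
Your argument is correct and more direct than what the paper offers: the paper gives no proof here but simply cites \cite{DL4}. Your observation that $\mathrm{tr}$ lands in the commutative ring $R(z,s_k)$ by Theorem~\ref{tr}, so that the $s_k$ are commuting indeterminates from the outset and every bbm equation is an identity between elements of a commutative ring, already settles the proposition in one line. The reference \cite{DL4} presumably carries out the hands-on verification you sketch at the end (comparing how $t_0^at_1^b$ and $t_0^bt_1^a$ reduce under the trace rules), but that is redundant once the codomain is fixed as commutative. Your second and third paragraphs can be dropped: the change-of-basis manipulations of Theorem~\ref{gp} act inside $\mathcal{H}$ \emph{before} $\mathrm{tr}$ is applied, so there is no mechanism by which they could introduce non-commutativity downstream, and no ``hidden non-commutative structure'' to guard against.
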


We now introduce convenient notation and the relevant subsets of monomials:

\begin{nt}\rm
We denote by \(s_{m,l}^{\epsilon_{m,l}} = s_{m}^{\epsilon_m}s_{m+1}^{\epsilon_{m+1}}\cdots s_l^{\epsilon_l}\), where \(m<l\), and set \(s_0:=1\). We also use \(s_{-k_m,k_n}^{\epsilon_{-k_m,k_n}} = s_{-k_m}^{\epsilon_{-k_m}}\cdots s_{-1}^{\epsilon_{-1}} s_1^{\epsilon_1}\cdots s_{k_n}^{\epsilon_{k_n}}\), where \(\epsilon_i\in\{0,1\}\).
\end{nt}

\begin{defn}\rm \label{Ssets}
Define the sets
\[
S := \{ s_{m,l}^{\epsilon_{m,l}} \mid m,l\in\mathbb{Z},\, \epsilon_i\in\{0,1\} \ \forall i \}, \quad
S_{(k)} := \{ s_{m,l}^{\epsilon_{m,l}} \in S \mid \textstyle\sum_{i\in I}i\cdot\epsilon_i=k \},
\]
where \(I\) is the set of indices appearing in \(s_{m,l}^{\epsilon_{m,l}}\).
\end{defn}

We now establish the splitting of the infinite system into self-contained subsystems:

\begin{thm}\label{comb}
Let \(\tau_{0,m}^{k_{0,m}}\in \Lambda_{(k)}\). Then
\[
\mathrm{tr}(\tau_{0,m}^{k_{0,m}})=\sum_i A_i\,s_{j,i}^{\epsilon_{j,i}},
\]
where \(s_{j,i}^{\epsilon_{j,i}} \in S_{(k)}\) and \(A_i \in \mathbb{C}[q^{\pm1},z^{\pm1}]\).
\end{thm}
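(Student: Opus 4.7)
The approach is to interpret the trace as a graded map with respect to a natural $\mathbb{Z}$-grading that captures the level-$k$ structure.

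First, I will grade $H_{1,n}(q)$ by setting $\deg(t)=1$ and $\deg(\sigma_i)=0$; the braid relations and the Hecke quadratic relation $\sigma_i^2=(q-1)\sigma_i+q$ are all homogeneous of degree $0$, so the grading descends to $H_{1,n}(q)$. Under this grading, the looping generators $t_i=\sigma_i\cdots\sigma_1 t\sigma_1\cdots\sigma_i$ and $t^{\prime}_i=\sigma_i\cdots\sigma_1 t\sigma_1^{-1}\cdots\sigma_i^{-1}$ both have degree $1$, so any $\tau_{0,m}^{k_{0,m}}\in\Lambda_{(k)}$ lies in the degree-$k$ piece of $H_{1,m+1}(q)$. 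I will simultaneously grade the polynomial algebra $\mathbb{C}[q^{\pm1},z^{\pm1}][s_k: k\in\mathbb{Z}]$ by $\deg(s_k)=k$ and $\deg(q)=\deg(z)=0$, so that $S_{(k)}$ is precisely the set of weighted-degree-$k$ monomials.

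Next, I will verify that the Markov trace of Theorem~\ref{tr} respects both gradings. Rules (1), (2), and (3) trivially preserve degree: conjugation is an identity, $\mathrm{tr}(1)=1$, and $\mathrm{tr}(a\sigma_n)=z\,\mathrm{tr}(a)$ multiplies by the grade-$0$ element $z$. Rule (4), $\mathrm{tr}(a(t^{\prime}_n)^k)=s_k\,\mathrm{tr}(a)$, sends an element of Hecke-degree $\deg(a)+k$ to $s_k\,\mathrm{tr}(a)$, whose weighted $s$-degree is $k+\deg_s(\mathrm{tr}(a))$; induction on the length of the Hecke-algebra representative then yields $\deg_s(\mathrm{tr}(a))=\deg(a)$, so degree is preserved. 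Applying this to $\tau_{0,m}^{k_{0,m}}\in\Lambda_{(k)}$ (of Hecke-degree $k$) shows that $\mathrm{tr}(\tau_{0,m}^{k_{0,m}})$ is a $\mathbb{C}[q^{\pm1},z^{\pm1}]$-linear combination of weighted-degree-$k$ monomials in the $s_i$'s, i.e.\ elements of $S_{(k)}$.

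An alternative route—closer in spirit to the paper's framework—would be to invoke Remark~\ref{rem2}(i) directly, writing $\tau_{0,m}^{k_{0,m}}\ \widehat{\simeq}\ \sum_j c_j\,\tau^{\prime}_j$ with $\tau^{\prime}_j\in\Lambda^{\prime}_{(k)}$ via the block-diagonal matrix $M^{-1}$, then computing each $\mathrm{tr}(\tau^{\prime}_j)=s_{\lambda_{0}^{(j)}}\cdots s_{\lambda_{n_j}^{(j)}}\in S_{(k)}$ by iterating rule~(4), and assembling by linearity. I expect the main obstacle, in either approach, to be the verification that the polynomial factors arising in the reduction (the stabilization factors implicit in $\widehat{\simeq}$, or the induction on Hecke-degree) remain in $\mathbb{C}[q^{\pm1},z^{\pm1}]$ and cannot introduce lower-level $S_{(k^{\prime})}$ monomials. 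This follows in both cases from the homogeneity of the Hecke relations and the block-diagonal structure of $M^{-1}$, but requires careful bookkeeping.
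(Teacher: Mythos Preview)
Your proposal is correct, and your ``alternative route'' via Remark~\ref{rem2}(i) and the block-diagonal matrix $M^{-1}$ is precisely what the paper does: it expresses $\tau\in\Lambda_{(k)}$ as a $\mathbb{C}[q^{\pm1},z^{\pm1}]$-combination of elements $\tau_i'\in\Lambda'_{(k)}$ (same level, by block-diagonality), then applies rule~(4) of the trace to each $\tau_i'$ to obtain a monomial in $S_{(k)}$.

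Your primary approach---grading $H_{1,n}(q)$ by $t$-degree and checking that $\mathrm{tr}$ is a graded map---is a genuinely different and cleaner argument. It bypasses the change-of-basis machinery entirely: once one observes that all defining relations of $H_{1,n}(q)$ are homogeneous and that trace rules (1)--(4) each preserve degree, the uniqueness clause of Theorem~\ref{tr} forces $\mathrm{tr}$ to be graded (formally: the degree-projected trace $\mathrm{tr}'(a):=\sum_d[\mathrm{tr}(a_d)]_d$ satisfies the same four rules, hence equals $\mathrm{tr}$). This sidesteps exactly the bookkeeping you flag as the ``main obstacle'' in the change-of-basis route---there are no stabilization factors to track, and no risk of leaking into $S_{(k')}$ for $k'\neq k$, because homogeneity is built in from the start. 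The paper's approach, by contrast, buys compatibility with the triangular structure of $M^{-1}$ that is used elsewhere (e.g.\ in Proposition~\ref{resp} and the ordering arguments), so it fits more naturally into the surrounding narrative even if it is less direct for this particular statement. One small suggestion: your phrase ``induction on the length of the Hecke-algebra representative'' is vague; the uniqueness argument just sketched, or an explicit induction on the number of strands $n$ using the inductive basis $\Sigma_n'$, would make the grading verification airtight.
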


\begin{proof}

Let $\tau \in \Lambda_{(k)}$. Then, using the inverse of the change of basis matrix, we express $\tau$ as sum of elements in $\Lambda^{\prime}$ and we obtain:

\[
\tau \ \widehat{\cong}\ \underset{i}{\sum} {B_i\cdot \tau_i^{\prime}}\ +\ B\cdot \tau^{\prime},
\]

\noindent where $B, B_i\in \mathbb{C}[q^{\pm 1}, z^{\pm 1}]$ and $\tau^{\prime}, \tau_i^{\prime} \in \Lambda_k^{\prime}$ such that $\tau \sim \tau^{\prime} < \tau_i^{\prime}, \forall i$. Moreover, we have that
\smallbreak
$${\rm tr}(\tau)\ =\ {\rm tr}\left(\underset{i}{\sum} {B_i\cdot \tau_i^{\prime}}\ +\ B\cdot \tau^{\prime}\right)\ =\ \underset{i}{\sum} {B_i\cdot {\rm tr}(\tau_i^{\prime})}\ +\ B\cdot {\rm tr}(\tau^{\prime}),$$

\noindent and the result follows from the fourth rule of the trace.

\end{proof}

\begin{cor}
The infinite system of equations splits into infinitely many \emph{self-contained} subsystems, each corresponding to a fixed level \(k\).
\end{cor}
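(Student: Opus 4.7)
The plan is to combine the preceding Lemma with Theorem~\ref{comb} and to observe that the level $k$ is preserved by every algebraic manipulation involved in rewriting each equation. Once this invariance is in place, an equation obtained from an element of $\Lambda^{\text{aug}}_{(k)}$ will involve only unknowns from $S_{(k)}$, so equations coming from different values of $k$ never interact and the system decomposes as claimed.

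Concretely, I would fix $\tau = \tau_{0,m}^{k_{0,m}} \in \Lambda^{\text{aug}}_{(k)}$, so that $\sum_{i=0}^m k_i = k$. By the Lemma, each of the two equations produced by $\tau$ reduces to an identity of the form
\[
\mathrm{tr}(\tau) \;=\; c^{\pm} \cdot \mathrm{tr}\bigl(\tau_+ \cdot \sigma_1^{\pm 1}\bigr),
\]
where $\tau_+ = \tau_{1,m+1}^{k_{0,m}}$ is the index shift $i \mapsto i+1$ of $\tau$ and $c^{\pm}$ is a unit in $\mathbb{C}[q^{\pm 1/2}, z^{\pm 1}]$. The index shift preserves the multiset of exponents, so $\tau_+$ still has level $k$ as a monomial in the looping generators, and the trailing $\sigma_1^{\pm 1}$ is a pure braiding tail that contributes nothing to the level.

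I would then apply Theorem~\ref{comb} to each side. For the left-hand side, reordering the (possibly unordered) exponents of $\tau$ via conjugation produces a $\mathbb{C}[q^{\pm 1}, z^{\pm 1}]$-combination of elements of $\Lambda_{(k)}$, and Theorem~\ref{comb} applied term by term yields $\mathrm{tr}(\tau) \in \mathbb{C}[q^{\pm 1}, z^{\pm 1}] \cdot S_{(k)}$. For the right-hand side, I would push $\tau_+ \cdot \sigma_1^{\pm 1} \in \Lambda^{\text{aug}}|{\rm H}_{m+2}$ through the reduction procedure of Theorem~\ref{gp} — managing index gaps, ordering exponents, and eliminating the braiding tail — to obtain a sum of elements of $\Lambda_{(k)}$, and then apply Theorem~\ref{comb} term by term once more.

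The main obstacle is verifying that each elementary move in Theorem~\ref{gp} preserves the level. Conjugation and stabilization are manifestly level-preserving, as they do not alter the multiset of exponents of looping generators. The gap-management and braiding-tail elimination steps use the Hecke-algebra quadratic relation and the defining relations of $B_{1,n}$, which redistribute exponents among neighbouring $t_i$'s but leave their total sum invariant. Tracking this invariant through the inductive reduction of Theorem~\ref{gp} shows that every surviving term lies in $\Lambda_{(k)}$, hence $\mathrm{tr}(\tau_+ \sigma_1^{\pm 1}) \in \mathbb{C}[q^{\pm 1}, z^{\pm 1}] \cdot S_{(k)}$ as well. Both sides of the trace equation thus live in the same level component, and the full system decouples into one self-contained subsystem for each $k \in \mathbb{Z}$, which is the content of the Corollary.
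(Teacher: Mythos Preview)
Your proposal is correct and follows essentially the same line the paper intends: the Corollary is stated without proof as an immediate consequence of Theorem~\ref{comb}, and you have correctly spelled out the missing step, namely that both $\mathrm{tr}(\tau)$ and $\mathrm{tr}(\tau_+\sigma_1^{\pm 1})$ land in $\mathbb{C}[q^{\pm 1}, z^{\pm 1}]\cdot S_{(k)}$ because the level is invariant under conjugation, stabilization, gap-management, and braiding-tail elimination (Theorem~\ref{gp}). The paper treats this level-preservation as understood and defers the explicit reduction of the right-hand side to Lemma~\ref{newgaps} and the surrounding discussion in \S\ref{HOMS}.4, so your argument is in fact more complete than what appears at the Corollary itself.
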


It is worth mentioning that each subsystem arises from bbm's applied to elements of \(\Lambda^{\text{aug}}_{(k)}\), or equivalently from bbm's on all moving strands of elements of \(\Lambda_{(k)}\) for a fixed \(k\in\mathbb{Z}\).

\subsection{An ordering in the set S}

We now define an {\it ordering} relation in the sets $S$ and $S_{(k)}$. We first need the following:

\begin{defn}\rm
Let $s_{m_i, m_j}^{\epsilon_{m_i, m_j}} \in S$.
\begin{itemize}
\item[i.] We define $e(s_{m_i, m_j}^{\epsilon_{m_i, m_j}})$ to be the sum of the exponents in the monomial $s_{m_i, m_j}^{\epsilon_{m_i, m_j}}$, namely:
$$e(s_{m_i, m_j}^{\epsilon_{m_i, m_j}})\ :=\ \underset{c \in I}{\sum}{\epsilon_c}.$$
\item[ii.] We define $in(s_{m_i, m_j}^{\epsilon_{m_i, m_j}})$ to be the set consisting of the indices of $s_{m_i, m_j}^{\epsilon_{m_i, m_j}} \in S$, namely:
$$in(s_{m_i, m_j}^{\epsilon_{m_i, m_j}})\ =\ \{m_c \in \mathbb{Z}\ |\ \epsilon_c=1\}.$$
\item[iii.] Finally, we define $in(|s_{m_i, m_j}^{\epsilon_{m_i, m_j}}|)$ to be the set consisting of the absolute values of the indices of $s_{m_i, m_j}^{\epsilon_{m_i, m_j}} \in S$, namely:
$$in(|s_{m_i, m_j}^{\epsilon_{m_i, m_j}}|)\ =\ \{m_c \in \mathbb{N}\ |\ \epsilon_c=1\}.$$
\end{itemize}
\end{defn}

\begin{ex}\rm
Let $s_{-2}s_1s_1s_3 \in S_{3}$. Then: $e(s_{-2}s_1s_1s_3) = 4$, $in(s_{-2}s_1s_1s_3) = \{-2, 1, 3 \}$ and $in(|s_{-2}s_1s_1s_3|) = \{1, 2, 3 \}$.
\end{ex}

\begin{defn}\label{ord2}\rm
Let $u_1, u_2 \in S$ such that:

$$u_1=s_{-k_m}^{\epsilon_{-k_m}}s_{-k_{m}+1}^{\epsilon_{-k_{m}+1}} \ldots s_{-1}^{\epsilon_{-1}} s_1^{\epsilon_1}s_2^{\epsilon_2}\ldots s_{k_n}^{\epsilon_{k_{n}}}\quad {\rm and} \quad u_2=s_{-l_a}^{\epsilon_{-l_a}}s_{-l_a+1}^{\epsilon_{-l_a+1}} \ldots s_{-1}^{\epsilon_{-1}} s_1^{\epsilon_1}s_2^{\epsilon_2}\ldots s_{l_b}^{\epsilon_{l_{b}}},$$ 

\noindent where $m, n, a, b \in \mathbb{N}$ and $k_i, l_i \in \mathbb{Z}$ and $\epsilon_i \in \{0, 1\}$ for all $i$. Then:

\smallbreak

\begin{itemize}
\item[(a)] If $u_1 \in S_{\mu}$ and $u_2 \in S_{\nu}$ such that $\mu < \nu$, then $u_1<u_2$.

\vspace{.1in}

\item[(b)] If $u_1, u_2 \in S_{\mu}$ and  $e(u_1) < e(u_2)$, then $u_1 < u_2$.

\vspace{.1in}

\item[(c)] If $u_1, u_2 \in S_{\mu}$, $e(u_1) = e(u_2)$ and $\max\{in(|u_1|) \} < \max\{in(|u_2|)\}$, then $u_1<u_2$.

\vspace{.1in}

\item[(d)] If $u_1, u_2 \in S_{\mu}$, $e(u_1) = e(u_2)$, $\max\{in(|u_1|) \} = \max\{in(|u_2|)\}$ and $\max\{in(u_1) \} < \max\{ in(u_2)\}$, then $u_1>u_2$.

\vspace{.1in}

\item[(e)] If $u_1, u_2 \in S_{\mu}$, $e(u_1) = e(u_2)$, $\max\{in(|u_1|) \} = \max\{in(|u_2|)\}$ and $\max\{in(u_1) \} = \max\{ in(u_2)\}=a_1$, then we have the following:
\smallbreak
\begin{itemize}
\item[$\alpha$.] if $\max\{in(u_1)\backslash \{a_1\}\} < \max\{in(u_2)\backslash \{a_1\}\}$, then $u_1>u_2$.
\smallbreak
\item[$\beta$.] If $\max\{in(u_1)\backslash \{a_1\}\}=\max\{in(u_2)\backslash \{a_1\}\}=a_2$, we compare $\max\{in(u_1)\backslash \{a_1, a_2\}\}$ to $\max\{in(u_2)\backslash \{a_1, a_2\}\}$ and continue if necessary until:
$$\max\{in(u_1)\backslash \{a_1, \ldots, a_c\}\} < \max\{in(u_2)\backslash \{a_1, \ldots, a_c\}\}.\quad {\rm Then}\ u_1 > u_2.$$
\end{itemize}
\end{itemize}
\end{defn}

\begin{ex}\rm
From Definition~\ref{ord2} we have that:
\[
\begin{array}{lllllll}
i. & s_1 \cdot s_3^2 & < & s_2^2\cdot s_3\cdot s_4, && & {\rm since}\ s_1 \cdot s_3^2\in S_7\ {\rm and}\ s_2^2\cdot s_3\cdot s_4\in S_{11}\\
&&&&&&\\
ii. & s_2^2           & < & s_1^2\cdot s_2 & < & s_1^4, & {\rm since}\ e(s_2^2)=2 < 3 = e(s_1^2\cdot s_2) < 4 = e(s_1^4)   \\
&&&&&&\\
iii. & s_2^2   & < & s_1\cdot s_3, &&& {\rm since}\ \max\{in(|s_2^2|) \} =2 < 3 = \max\{in(|s_1\cdot s_3|)\}\\
&&&&&&\\
iv. & s_{-5}s_1s_4s_5 & < & s_{-5}s_3^2s_4, &&& {\rm since}\ \max\{s_{-5}s_1s_4s_5\} = 5 > 4 = \max\{s_{-5}s_3^2s_4 \}\\
&&&&&&\\
v. & s_{-5}s_1s_4s_5   & < & s_{-5}s_2s_3s_5, &&& {\rm since}\ \max\{in(s_{-5}s_1s_4s_5)\backslash \{s_5\} \} =4\ {\rm and}\\
    &                  &     &                 &&&  \max\{in(s_{-5}s_2s_3s_5)\backslash \{s_5\}\}= 3 \\
\end{array}
\]
\end{ex}









\begin{prop}
The set $S$ equipped with the ordering given in Definition~\ref{ord2} is a totally ordered set.
\end{prop}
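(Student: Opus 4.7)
The plan is to recast Definition~\ref{ord2} as a lexicographic comparison of a single tuple of invariants, from which totality of $<_S$ follows at once from totality of the lex order. Concretely, for $u \in S$ I would set
\[
\Phi(u) \;:=\; \bigl(\mu(u),\ e(u),\ \max in(|u|),\ -\alpha_1(u),\ -\alpha_2(u),\ \ldots\bigr),
\]
where $\mu(u)=\sum_i i\epsilon_i$ is the weight (so that $u\in S_{\mu(u)}$), $e(u)$ is the exponent sum, and $\alpha_k(u)$ is the $k$-th largest element of $in(u)$, with the convention $\alpha_k(u):=-\infty$ once $in(u)$ is exhausted. The codomain is equipped with the lexicographic order, with $-\infty$ declared smaller than every integer.

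The verification then amounts to matching the clauses of Definition~\ref{ord2} with successive coordinates of $\Phi$: clause~(a) uses the $\mu$-coordinate, (b) the $e$-coordinate, (c) the $\max in(|\cdot|)$-coordinate, (d) the coordinate $-\alpha_1$ (the sign reversal converts the rule $\max in(u_1)<\max in(u_2)\Rightarrow u_1>u_2$ into $-\alpha_1(u_1)>-\alpha_1(u_2)$), and (e) the remaining coordinates $-\alpha_2,-\alpha_3,\ldots$ in turn. Since $in(u)$ is finite for every $u\in S$, the recursion in (e) terminates in finitely many steps: either some coordinate strictly differs and the lex comparison decides the outcome, or all coordinates of $\Phi(u_1)$ and $\Phi(u_2)$ agree. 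Under the standing convention $\epsilon_i\in\{0,1\}$ from Definition~\ref{Ssets}, the signed index set determines the monomial, so in the latter case one must have $u_1=u_2$.

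The main obstacle is the bookkeeping in clause~(e), where one must verify that the recursive descent either terminates with a strict inequality at some finite stage or else exhausts $in(u_1)=in(u_2)$, forcing $u_1=u_2$; this rests on the finiteness of $in(u)$ and on the convention on $\epsilon_i$. Once this is settled, trichotomy, antisymmetry and transitivity of $<_S$ all follow from the corresponding properties of the lex order on the image of $\Phi$, yielding the claimed total ordering. A quick sanity check against the five examples following Definition~\ref{ord2} confirms the sign conventions, which will also set up the well-ordering statement needed later for each level subset $S_{(k)}$.
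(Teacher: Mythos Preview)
Your approach is correct and takes a genuinely different route from the paper's. The paper verifies transitivity by a six-case analysis, tracking which clause of Definition~\ref{ord2} governs each of the pairs $(u_1,u_2)$ and $(u_2,u_3)$ and checking compatibility in every combination (with the recursive clause~(e) handled by a further split into subcases $\beta\le c$ and $\beta>c$); it then asserts that antisymmetry follows similarly and that totality holds because the clauses are exhaustive. Your lexicographic encoding via $\Phi$ is more conceptual: once the clause-by-coordinate matching is checked, transitivity, antisymmetry and trichotomy all come for free from the lex order on the target, and the injectivity of $\Phi$ (which you correctly ground in the $\epsilon_i\in\{0,1\}$ convention from Definition~\ref{Ssets}) handles the terminal case of clause~(e). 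What you gain is a uniform argument that avoids the case explosion and makes the subsequent well-ordering of each $S_{(k)}$ immediate; what the paper's direct verification gains is that it stays entirely inside the vocabulary of Definition~\ref{ord2} without introducing auxiliary objects. One point worth flagging explicitly in your final write-up: the paper's own worked examples (e.g.\ $s_2^2$, $s_1 s_3^2$) contain repeated factors, in tension with the $\epsilon_i\in\{0,1\}$ stipulation on which your injectivity step rests; since Definition~\ref{ord2} itself never separates $s_1$ from $s_1^2$ (the set $in(u)$ forgets multiplicities), this is an inconsistency in the source rather than a defect in your argument, and you are right to work with the definition as stated.
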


\begin{proof}
In order to show that the set $S$ is a totally ordered set when equipped with the ordering relation defined in Definition~\ref{ord2}, we need to prove that the ordering relation is antisymmetric, transitive and total. We show that the ordering relation is transitive. The fact that the ordering relation is antisymmetric follows similarly. Totality follows from the fact that all different cases have been considered. Let

\[
u_1=s_{-k_m, k_n}^{\epsilon_{-k_{m}, k_n}},\qquad u_2=s_{-l_a, l_b}^{\epsilon_{-l_a, l_b}},\qquad u_3=s_{-p_c, p_d}^{\epsilon_{-p_c, p_d}},
\]

\noindent such that $u_1\ <\ u_2$ and $u_2\ <\ u_3$. We will prove that $u_1\ <\ u_3$. Indeed:

\smallbreak

 Since $u_1\ <\ u_2$, one of the following holds:

\smallbreak

\begin{itemize}
\item[i.] If $u_1\in S_{\mu}$ and $u_2\in S_{v}$ such that $\mu < v$, then, since $u_2<u_3$ we have that if $u_3\in S_{\phi}$, then $v\leq \phi$ and thus, $u_1<u_3$.

\bigbreak

\item[ii.] If $u_1, u_2, u_3\in S_{\mu}$ and $e(u_1)<e(u_2)$, then, since $u_2<u_3$ we have that $e(u_2) \leq e(u_3)$ and thus, $u_1<u_3$.

\bigbreak

\item[iii.] If $u_1, u_2, u_3\in S_{\mu}$, $e(u_1)=e(u_2)=e(u_3)$ and $\max\{in(|u_1|) \} < \max\{in(|u_2|)\}$, then, since $u_2<u_3$ we have that $\max\{in(|u_2|) \} \leq \max\{in(|u_3|)\}$ and thus, $u_1<u_3$.

\bigbreak

\item[iv.] If $u_1, u_2, u_3\in S_{\mu}$, $e(u_1)=e(u_2)=e(u_3)$, $\max\{in(|u_1|) \} = \max\{in(|u_2|)\} = \max\{in(|u_3|)\}$ and $\max\{in(u_1) \} > \max\{in(u_2)\}$, then, since $u_2<u_3$ we have that $\max\{in(u_2) \} > \max\{in(u_3)\}$ and thus, $u_1<u_3$.

\bigbreak

\item[v.] If $u_1, u_2, u_3\in S_{\mu}$, $e(u_1)=e(u_2)=e(u_3)$, $\max\{in(|u_1|) \} = \max\{in(|u_2|)\} = \max\{in(|u_3|)\}$, $\max\{in(u_1) \} = \max\{in(u_2)\} = \max\{in(u_3)\} = j$ and $\max\{in(u_1)\backslash \{j\} \} > \max\{in(u_2)\backslash \{j\}\}$, then, since $u_2<u_3$ we have that $\max\{in(u_2)\backslash \{j\} \} \geq \max\{in(u_3)\backslash \{j\}\}$ and thus, $u_1 < u_3$.

\bigbreak

\item[vi.] If $u_1, u_2, u_3\in S_{\mu}$, $e(u_1)=e(u_2)=e(u_3)$, $\max\{in(|u_1|) \} = \max\{in(|u_2|)\} = \max\{in(|u_3|)\}$, $\max\{in(u_1) \} = \max\{in(u_2)\} = \max\{in(u_3)\} = j_1$, $\max\{in(u_1)\backslash \{j_1\} = \max\{in(u_2)\backslash \{j_1\}$, $\ldots$, $\max\{in(u_1)\backslash \{j_1, \ldots, j_{\beta-1}\} \} = \max\{in(u_2)\backslash \{j_1, \ldots, j_{\beta-1}\} \}$ and \\
$\max\{in(u_1)\backslash \{j_1, \ldots, j_{\beta}\} \} > \max\{in(u_2)\backslash \{j_1, \ldots, j_{\beta}\} \}$, we have that since $u_2<u_3$, there exist an index $c$ such that $\max\{in(u_2)\backslash \{j_1, j_2\}\} = \max\{in(u_3)\backslash \{j_1, j_2\}\}$, $\ldots$ , \\
$\max\{in(u_2)\backslash \{j_1, \ldots, j_{c-1}\} \} = \max\{in(u_3)\backslash \{j_1, \ldots, j_{c-1}\} \}$ and \\
$\max\{in(u_2)\backslash \{j_1, \ldots, j_{c}\} \} > \max\{in(u_3)\backslash \{j_1, \ldots, j_{c}\} \}$. We now consider the following cases:
\smallbreak
\begin{itemize}
\item[a.] If $\beta \leq c$, we have that\\
$\max\{in(u_1)\backslash \{j_1, \ldots, j_{\beta}\} \} > \max\{in(u_2)\backslash \{j_1, \ldots, j_{\beta}\} \} =  \max\{in(u_3)\backslash \{j_1, \ldots, j_{\beta}\}$ and thus, $u_1 < u_3$.
\smallbreak
\item[b.] If $b > c$, we have that\\
$\max\{in(u_1)\backslash \{j_1, \ldots, j_{c}\} \} = \max\{in(u_2)\backslash \{j_1, \ldots, j_{c}\} \} >  \max\{in(u_3)\backslash \{j_1, \ldots, j_{c}\}$ and thus, $u_1 < u_3$.
\end{itemize}
\end{itemize}
\noindent Thus, the ordering relation defined in Definition~\ref{ord2} is transitive.
\end{proof}

\begin{prop}
The sets $S_{(k)}$ equipped with the ordering relation defined in Definition~\ref{ord2} are totally ordered and well-ordered sets for all $k$.
\end{prop}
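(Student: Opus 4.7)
The plan is to first observe that total ordering of $S_{(k)}$ is inherited from the previous proposition, and then to establish well-ordering by ruling out infinite strictly decreasing chains.

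\textbf{Total ordering.} Since $S_{(k)} \subseteq S$, the restriction of the order of Definition~\ref{ord2} to $S_{(k)}$ is still total by the previous proposition. Moreover, clause (a) never applies between two elements of $S_{(k)}$ (they share level $k$), so every comparison is decided by clauses (b)--(e); nothing else needs to be checked here.

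\textbf{Well-ordering.} I would argue by contradiction. Suppose $u_1 > u_2 > u_3 > \cdots$ is an infinite strictly decreasing chain in $S_{(k)}$. The contrapositive of clause (b) forces $e(u_i) \geq e(u_{i+1})$ for every $i$; since $\{e(u_i)\}$ is a sequence of non-negative integers, it stabilizes at some constant value $e$ for $i \geq N_1$. The contrapositive of clause (c) then yields $\max\{in(|u_i|)\} \geq \max\{in(|u_{i+1}|)\}$ for all $i \geq N_1$, and again, being a weakly decreasing sequence of non-negative integers, it stabilizes at some value $M$ for $i \geq N_2$. From that index onward, each $u_i$ is a monomial supported on the finite set $\{-M,\ldots,-1,1,\ldots,M\}$ with total exponent sum $e(u_i) = e$, and there are only finitely many such monomials. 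This contradicts the strict decrease of the chain past $N_2$, so no such chain can exist and $S_{(k)}$ is well-ordered.

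The hard part, if any, is only bookkeeping: verifying that the contrapositives of (b) and (c) genuinely force weak decrease of $e(u_i)$ and $\max\{in(|u_i|)\}$ along the chain, and noting that clauses (d) and (e) — which compare signed-index data lexicographically with a sign flip — only distinguish elements within a fixed $(e,M)$-class and hence do not interfere with the stabilization-plus-finiteness argument above.
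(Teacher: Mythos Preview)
Your argument is correct and in fact more complete than the paper's. The paper's proof is a two--liner: it observes that $S_{(k)}\subset S$ inherits the total order, and then simply states that $s_k$ is the minimum element of $S_{(k)}$ ``and so $S_{(k)}$ is a well ordered set.'' Taken literally, the existence of a global minimum does not imply well--ordering, so the paper is at best elliptical at this step. Your route---ruling out infinite strictly decreasing chains by using the contrapositives of clauses (b) and (c) to stabilize first $e(\cdot)$ and then $\max\{in(|\cdot|)\}$, and then invoking the finiteness of monomials of fixed total degree supported on a fixed finite index set---actually establishes the claim. The trade--off is that the paper's version, while sketchy, records the concrete fact $\min S_{(k)}=s_k$, which is used later in \S\ref{HOMS}; your argument proves well--ordering but does not identify the minimum explicitly, so you might add that observation as a one--line corollary.
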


\begin{proof}
Since $S_{(k)} \subset S, \forall\ k$, $S_{(k)}$ inherits the property of being a totally ordered set from $S$. Moreover, according to Definition~\ref{ord2}, $s_k$ is the minimum element of $S_{(k)}$ and so $S_{(k)}$ is a well ordered set.
\end{proof}

\begin{prop}\label{resp}
The ordering relation defined in Definition~\ref{ord2} on the sets $S$ and $S_{(k)}$, respects the ordering relation defined in Definition~\ref{order} on the sets $\Lambda^{\prime}$ and $\Lambda^{\prime}_{(k)}$.
\end{prop}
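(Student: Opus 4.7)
The plan is to prove Proposition~\ref{resp} by a direct case-by-case matching between Definition~\ref{order} and Definition~\ref{ord2}, using the explicit formula for the Lambropoulou trace on $\Lambda'$. By the remark following Theorem~\ref{inv}, for $\tau' = t^{k_0}{t'_1}^{k_1}\cdots {t'_n}^{k_n} \in \Lambda'$ one has $\mathrm{tr}(\tau') = s_{k_n}\cdots s_{k_1}s_{k_0}$, which is a monomial in $S$. This gives an explicit map $\mathrm{tr}\colon \Lambda' \to S$, and the proposition amounts to verifying that each comparison rule in Definition~\ref{order} is matched by a corresponding comparison rule in Definition~\ref{ord2} under this map.

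First I would settle the coarse level. The level $\sum_{i=0}^{n} k_i$ of $\tau' \in \Lambda'_{(k)}$ equals the weighted sum of subscripts appearing as factors of $\mathrm{tr}(\tau')$, so $\tau' \in \Lambda'_{(k)}$ if and only if $\mathrm{tr}(\tau') \in S_{(k)}$; this matches case (a) of Definition~\ref{order} with case (a) of Definition~\ref{ord2}. Since the trace has exactly $n+1$ factors, one per looping element of $\tau'$, one has $e(\mathrm{tr}(\tau')) = ind(\tau') + 1$, which matches case (b)(i) of Definition~\ref{order} with case (b) of Definition~\ref{ord2}. Because elements of $\Lambda'$ have no gaps in the subscripts of their looping factors, case (b)(ii)($\alpha$) of Definition~\ref{order} never arises within $\Lambda'$, so the only remaining work is to compare exponents within a fixed level $k$ and fixed index $n$.

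For this last case I would use the sorting $k_0 \geq k_1 \geq \cdots \geq k_n$ to translate the top-down scan of cases ($\beta$) and ($\gamma$) in Definition~\ref{order} into the max-based recursion of cases (c)--(e) in Definition~\ref{ord2}. The extremal exponents $k_0$ and $k_n$ realize $\max \{in(|\mathrm{tr}(\tau')|)\}$ and $\max\{in(\mathrm{tr}(\tau'))\}$; when two elements of $\Lambda'_{(k)}$ agree on these invariants, the first difference in the top-down scan must appear at a sub-extremal position, which is precisely what the iterative removal in case (e) of Definition~\ref{ord2} inspects next. The sign tie-breaker of case ($\gamma$) matches the \emph{smaller max value $\Rightarrow$ larger monomial} rule of case (d), since the signed max is what distinguishes equal absolute values. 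I would make this rigorous by induction on the number of distinct absolute values of exponents appearing in the pair $(\tau', u')$.

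The main obstacle I expect is in this final step: the $\Lambda'$ ordering scans a linearly indexed tuple of exponents from the top down, whereas the $S$ ordering is defined through maxima of the (unordered) support. The matching is delicate when the maximum absolute value is realized both on the positive side (at $k_0$) and on the negative side (at $k_n$), since the recursive removal in (e) must be tracked in lockstep with the top-down scan and the support can obscure multiplicity information. I expect the cleanest route is a finite case analysis on the signs of $k_0$ and $k_n$, combined with the induction on distinct absolute values described above; the iterated tie-breaking subcases of (e)($\beta$) are the ones requiring the most careful bookkeeping.
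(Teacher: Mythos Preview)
Your proposal is correct and follows essentially the same approach as the paper: a direct case-by-case matching of Definition~\ref{order} against Definition~\ref{ord2} via the trace formula $\mathrm{tr}(\tau')=s_{k_n}\cdots s_{k_0}$. The paper's own proof disposes of the final case~(iii) (your $(\beta)$/$(\gamma)$ versus (c)--(e) matching) in a single sentence invoking ``lexicographical comparison of the index sets,'' whereas you are more explicit about the potential subtlety when the maximal absolute value is attained at both ends; your proposed induction on the number of distinct absolute values is a sound way to make that step rigorous, and is more detailed than what the paper actually writes.
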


\begin{proof}
Let $\tau_1^{\prime}, \tau_2^{\prime} \in \Lambda^{\prime}$ such that
\[
\tau_1^{\prime}= t^{k_0}{t_1^{\prime}}^{k_1} \ldots {t_m^{\prime}}^{k_m} < t^{l_0}{t_1^{\prime}}^{l_1} \ldots {t_n^{\prime}}^{l_n}=\tau_2^{\prime}.
\]
We show that $u_1=tr(\tau_1^{\prime})=s_{k_0}s_{k_1} \ldots s_{k_m} < u_2=tr(\tau_2^{\prime})=s_{l_0}s_{l_1} \ldots s_{l_n}$ in the ordering of Definition~\ref{ord2}.

Since $\tau_1^{\prime} < \tau_2^{\prime}$, by Definition~\ref{order} one of the following holds:
\begin{itemize}
\item[i.] $\sum_{i=0}^{m}k_i =\mu < \nu = \sum_{i=0}^{n}l_i$. Then $u_1\in S_\mu$ and $u_2\in S_\nu$, so $u_1<u_2$ by Definition~\ref{ord2}(a).
\smallskip

\item[ii.] $\mu = \nu$, but $ind(\tau_1^{\prime})=m< n=ind(\tau_2^{\prime})$. Then $e(u_1)=m < n=e(u_2)$, so $u_1<u_2$ by Definition~\ref{ord2}(b).
\smallskip

\item[iii.] $\mu=\nu$, $m=n$, and the sequences of indices agree up to some position $i$, where $|k_i|<|l_i|$. Since the exponents are ordered, the lexicographical comparison of the index sets in $u_1,u_2$ yields $u_1<u_2$ by Definition~\ref{ord2}(c)–(e).
\end{itemize}

Thus in all cases the ordering of $u_1,u_2$ respects that of $\tau_1^{\prime},\tau_2^{\prime}$.
\end{proof}


\begin{remark}\rm
It is worth mentioning that the ordering relation defined on the sets $S_{(k)}$ does not necessarily respect the ordering relation defined on the set $({\Lambda}^{aug})^{\prime}$. For example, $t^2t^{\prime}_1 > t{t^{\prime}}^2_1$, but $tr(t^2t^{\prime}_1) = tr(t{t^{\prime}}^2_1) =s_1s_2$.
\end{remark}

\begin{remark}\rm
Let $\tau\in \Lambda$ and $tr(\tau)=\underset{i}{\Sigma}a_i\cdot S_i$, where $a_i\in \mathbb{C}[q^{\pm 1}, z^{\pm 1}]$ and $S_i$ are monomials of $s_i$'s for all $i$. Let $\max\{tr(\tau)\}$ denote the maximum monomial in $s_i$'s coming from the computation of the trace of $\tau$. Then, according to Proposition~\ref{resp} and the change of basis matrix, if $\tau_1<\tau_2 \in \Lambda$, then $\max\{tr(\tau_1)\}>\max\{tr(\tau_2)\}$, by ignoring the coefficients in $\mathbb{C}[q^{\pm 1}, z^{\pm 1}]$.
\end{remark}

We now fix $k\in \mathbb{Z}$ and consider the subset of $\Lambda^{aug}$ of level $k$, $\Lambda_{(k)}^{aug}$. We perform bbm's on all elements in $\Lambda_{(k)}^{aug}$ on their first moving strand and we obtain the equations $X_{\widehat{\tau}}\ =\ X_{\widehat{bbm_1(\tau)}}, \quad {\rm for\ all}\ \tau\in \Lambda_{(k)}^{aug}$. Denote this subsystem of equations by $[\Lambda_{(k)}^{aug}]$ and by Theorem~\ref{comb}, $S_{(k)}$ denotes the set of all unknowns in $[\Lambda_{(k)}^{aug}]$. Set now $\min\{S_{(k)} \}$ to be the minimum element in $S_{(k)}$. Then, from Definition~\ref{ord2} we have that:

\begin{equation}
\min\{S_{(k)} \}\ =\  s_k
\end{equation}

\noindent and moreover, if $k\in \mathbb{N}$, then $\max\{S_{(k)} \}=s_1^k$.


\bigbreak

We will prove that in $\mathcal{S}(S^1 \times S^2)$, all elements in $S_{(k)}$ can be written as sums of elements in $S_{(k)}$ of lower order, and thus:

$$
s_{i, j}^{k_{i, j}} \ =\ a \cdot s_k,\ {\rm for\ all}\ s_{i, j}^{k_{i, j}} \in S_{(k)}\ {\rm where}\ a\in \mathbb{C}[q^{\pm 1}, z^{\pm 1}].
$$

\subsection{Dependence of the elements in $S_{(k)}$}

Let $\tau\in \Lambda$ and $\tau \sim \tau^{\prime}$, the homologous word in $\Lambda^{\prime}$. Consider the inverse of the change of basis matrix, converting elements in $\Lambda$ to sum of elements in $\Lambda^{\prime}$. Then:

\[
\tau \ = \ A\cdot \tau^{\prime} \ + \ \sum_i A_i \cdot \tau_i^{\prime}\\
\] 

\noindent where $\tau_i^{\prime}>\tau, \forall i$ and $A_i\in \mathbb{C}[q^{\pm 1}, z^{\pm 1}]$ (see also Remark~3). So, when converting elements in $\Lambda$ to sums of elements in $\Lambda^{\prime}$, we obtain the homologous word together with greater order terms. Equivalently,
we have the following:

\begin{prop}
Let $\tau_{0,m}^{k_{0,m}} \in \Lambda$. Then, the following relation holds:

$$tr(\tau_{0,m}^{k_{0,m}})=q^{\sum_{i=1}^{m}{k_i\cdot i}}s_{k_{0}, k_{m}} + \sum f(q,z) s_{\lambda_{0}, \lambda_{m}},$$

\noindent where $s_{\lambda_{0}, \lambda_{m}} > s_{k_{0}, k_{m}}$ for all $s_{\lambda_{0}, \lambda_{m}}$.
\end{prop}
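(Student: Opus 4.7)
The plan is to express $\tau_{0,m}^{k_{0,m}}\in\Lambda$ via the inverse of the change of basis matrix $M^{-1}$ as a combination of elements in $\Lambda^{\prime}$, and then apply the Markov trace termwise. Since the transition matrix $M$ from $\Lambda^{\prime}$ to $\Lambda$ is block lower-triangular with invertible diagonal entries, $M^{-1}$ is block upper-triangular, so converting from $\Lambda$ to $\Lambda^{\prime}$ produces the homologous word plus terms of strictly greater order in $\Lambda^{\prime}$ (Remark~\ref{rem2}(i)).

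First, I would recall the explicit formula from Theorem~7 of \cite{DL2} (restated just before Theorem~\ref{gp} in the excerpt): the diagonal entry of $M$ corresponding to $\tau_{0,m}^{k_{0,m}}$ is $q^{-\sum_{i=1}^{m}i\cdot k_i}$. Inverting, the corresponding diagonal entry of $M^{-1}$ is $q^{\sum_{i=1}^{m}i\cdot k_i}$. Thus, writing ${\tau^{\prime}}_{0,m}^{k_{0,m}}$ for the homologous word of $\tau_{0,m}^{k_{0,m}}$, applying $M^{-1}$ yields
\[
\tau_{0,m}^{k_{0,m}} \ =\ q^{\sum_{i=1}^{m}i\cdot k_i}\cdot {\tau^{\prime}}_{0,m}^{k_{0,m}}\ +\ \sum_i B_i\cdot \tau_i^{\prime},
\]
where each $\tau_i^{\prime}>{\tau^{\prime}}_{0,m}^{k_{0,m}}$ in the ordering of Definition~\ref{order} and $B_i\in\mathbb{C}[q^{\pm 1},z^{\pm 1}]$.

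Next, I would apply the trace linearly. By rule (4) of Theorem~\ref{tr} (and $\mathrm{tr}(1)=1$), one has $\mathrm{tr}({\tau^{\prime}}_{0,m}^{k_{0,m}})=s_{k_0}s_{k_1}\cdots s_{k_m}=s_{k_0,k_m}$, and each $\mathrm{tr}(\tau_i^{\prime})$ is likewise a monomial $s_{\lambda_0,\lambda_m}$ in the $s_i$'s. Hence
\[
\mathrm{tr}(\tau_{0,m}^{k_{0,m}})\ =\ q^{\sum_{i=1}^{m}i\cdot k_i}\cdot s_{k_0,k_m}\ +\ \sum_i B_i\cdot s_{\lambda_0,\lambda_m}.
\]

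Finally, I would invoke Proposition~\ref{resp}: since each $\tau_i^{\prime}>{\tau^{\prime}}_{0,m}^{k_{0,m}}$ in the $\Lambda^{\prime}$-ordering, the corresponding monomials satisfy $s_{\lambda_0,\lambda_m}>s_{k_0,k_m}$ in the $S$-ordering of Definition~\ref{ord2}. Renaming the coefficients $B_i$ as $f(q,z)$ (and grouping repeated monomials if necessary) produces exactly the claimed expansion. The only delicate point is pinning down the leading coefficient as $q^{\sum_{i=1}^{m}i\cdot k_i}$; everything else is a direct consequence of the upper-triangular structure of $M^{-1}$, the fourth trace rule, and the compatibility of orderings given by Proposition~\ref{resp}.
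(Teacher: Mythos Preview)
Your proposal is correct and follows essentially the same approach as the paper: the paper's proof simply says ``It follows directly from Remark~\ref{rem2}, Definition~\ref{ord2} and the discussion above,'' where ``the discussion above'' is precisely the expansion $\tau = A\cdot\tau^{\prime}+\sum_i A_i\cdot\tau_i^{\prime}$ with $\tau_i^{\prime}>\tau^{\prime}$ obtained from $M^{-1}$. You have unpacked the same three ingredients (the upper-triangular structure of $M^{-1}$, the fourth trace rule, and the compatibility of orderings via Proposition~\ref{resp}) and in addition made the leading coefficient $q^{\sum_{i=1}^{m}i\,k_i}$ explicit by inverting the diagonal entry recorded just before Theorem~\ref{gp}.
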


\begin{proof}
It follows directly from Remark~\ref{rem2}, Definition~\ref{ord2} and the discussion above.
\end{proof}

Let now $\tau \in \Lambda_k^{aug}$. We apply a bbm on the first moving strand of $\tau$ and we have that $\tau \overset{bbm}{\longrightarrow} \tau_+\cdot \sigma_1^{\pm 1}$, where $\tau_+$ is the word $\tau$ with all indices shifted by one. As shown in \cite{DL2}, applying conjugation and stabilization moves on $\tau_+\cdot \sigma_1^{\pm 1}$, we can manage the {\it gap} in the first index of this element and obtain a sum of elements in $\Lambda^{aug}$. We recall the following result from \cite{DL2}:

\begin{lemma}[Lemma~13 \cite{DL2}]\label{oldlem}
For $k\in \mathbb{N}$, $\epsilon =1$ or $\epsilon =-1$ and $a\in {\rm H}_{a, n}(q)$, the following relations hold:
\[
t_i^{\epsilon k}\cdot a\ \widehat{=}\ q^{\epsilon (k-1)}\cdot t_{i-1}^{\epsilon k}\cdot \sigma_i^{\epsilon}\cdot a\cdot \sigma_i^{\epsilon} \ +\ \underset{u=1}{\overset{k-1}{\sum}} q^{\epsilon (u-1)} (q^{\epsilon}-1)\cdot t_{i-1}^{\epsilon u}\cdot t_i^{\epsilon (k-u)}\cdot a\cdot \sigma_i^{\epsilon}.
\]
\end{lemma}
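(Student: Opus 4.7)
The plan is to prove the $\epsilon=+1$ case by induction on $k$; the $\epsilon=-1$ case follows by the same argument with every $\sigma_i$ replaced by $\sigma_i^{-1}$ and the dual quadratic relation $\sigma_i^{-2}=(q^{-1}-1)\sigma_i^{-1}+q^{-1}$ used in place of $\sigma_i^2=(q-1)\sigma_i+q$. Four ingredients drive the computation: the definition $t_i=\sigma_it_{i-1}\sigma_i$; the Hecke quadratic relation; the commutativity $t_it_{i-1}=t_{i-1}t_i$ in $\mathrm{H}_{1,n}(q)$, which is a consequence of the type-B mixed braid relation $\sigma_1 t\sigma_1 t=t\sigma_1 t\sigma_1$ propagated via the inductive definition of the looping generators; and cyclic invariance of $\widehat{=}$.

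The first step is to establish the in-algebra identity
\[
t_i^k \;=\; q^{k-1}\,\sigma_it_{i-1}^k\sigma_i \;+\; \sum_{u=1}^{k-1}q^{u-1}(q-1)\,t_i^{k-u}\,t_{i-1}^u\,\sigma_i
\]
by induction on $k$. The base $k=1$ is the definition of $t_i$. For the inductive step one first derives, directly from the definition and the quadratic relation, the auxiliary formula
\[
t_i\sigma_i\;=\;(q-1)\,t_i\,+\,q\,\sigma_it_{i-1}.
\]
Writing $t_i^{k+1}=t_i\cdot t_i^k$, substituting the hypothesis, and applying this auxiliary formula to push $t_i$ past the $\sigma_i$ in the leading term, the ``straight'' piece produces a summand $q^k(q-1)\,t_it_{i-1}^k\sigma_i$ that fills the $u=k$ slot of the re-indexed sum, while the ``jumping'' piece yields the new leading term $q^{k+1}\,\sigma_it_{i-1}^{k+1}\sigma_i$; commutativity is then invoked to rewrite each summand in the form $t_i^{(k+1)-u}t_{i-1}^u\sigma_i$.

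The second step is to multiply on the right by $a$ and pass to the stated form via cyclic conjugation. The leading term $\sigma_it_{i-1}^k\sigma_ia$ becomes $t_{i-1}^k\sigma_ia\sigma_i$ by moving the initial $\sigma_i$ to the end. In each general summand $t_i^{k-u}t_{i-1}^u\sigma_ia$, commutativity swaps $t_i^{k-u}$ and $t_{i-1}^u$, and a cyclic rearrangement repositions $\sigma_i$ past $a$ so that the summand acquires the required shape $t_{i-1}^ut_i^{k-u}a\sigma_i$. The case $\epsilon=-1$ is then handled by the same two-stage argument with $\sigma_i^{-1}$ throughout.

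The main obstacle is the bookkeeping in the inductive step: each application of the auxiliary formula creates two terms, and one must verify that together with the hypothesis they telescope into precisely the right coefficients $q^{u-1}(q-1)$ and exponents for the sum indexed by $k+1$. A secondary point of care is that the rearrangement in step two relies on the commutativity of the looping generators as a genuine algebra identity (not merely a $\widehat{=}$-identity), because it must be applied inside a larger expression before the cyclic move that shifts $\sigma_i$ past $a$.
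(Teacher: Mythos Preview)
The paper does not prove this lemma; it is quoted from \cite{DL2} without argument, so there is no in-paper proof to compare against. Your Step~1 induction is correct and the bookkeeping closes as you describe. The gap is in Step~2, for the summation terms. After commuting the looping generators you have $t_{i-1}^{u}t_i^{k-u}\sigma_i a$ and assert that a cyclic rearrangement produces $t_{i-1}^{u}t_i^{k-u}a\sigma_i$. But $\widehat{=}$ only allows moving a block from one end of the word to the other; it cannot transpose two interior factors while fixing everything else. The cyclic shifts of $t_{i-1}^{u}t_i^{k-u}\sigma_i a$ are $a\,t_{i-1}^{u}t_i^{k-u}\sigma_i$, $\sigma_i a\,t_{i-1}^{u}t_i^{k-u}$, and so on, none of which is $t_{i-1}^{u}t_i^{k-u}a\sigma_i$ unless $a$ commutes with the loop part. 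Since $a$ is arbitrary, this step fails as written.

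The repair is small and keeps your two-stage strategy intact: run the same induction but multiply by $t_i$ on the \emph{right} and use the companion relation $\sigma_i t_i=(q-1)t_i+qt_{i-1}\sigma_i$ in place of your $t_i\sigma_i$ formula. This yields the equivalent in-algebra identity
\[
t_i^{k}\;=\;q^{k-1}\,\sigma_i t_{i-1}^{k}\sigma_i\;+\;\sum_{u=1}^{k-1}q^{u-1}(q-1)\,\sigma_i\,t_{i-1}^{u}t_i^{k-u},
\]
in which every term, including each summand, begins with $\sigma_i$. Multiplying on the right by $a$ and performing a single cyclic move of the leading $\sigma_i$ to the far right now gives exactly $t_{i-1}^{k}\sigma_i a\sigma_i$ for the main term and $t_{i-1}^{u}t_i^{k-u}a\sigma_i$ for each summand, with no need for the looping-generator commutativity at all. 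The $\epsilon=-1$ case then goes through precisely as you indicate.
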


Using now Lemma~\ref{oldlem} we have the following:

\begin{lemma}\label{newgaps}
For $k\in \mathbb{N}$, $\epsilon =1$ or $\epsilon =-1$ and $a\in {\rm H}_{a, n}(q)$, the following relations hold:
\[
\begin{array}{lll}
\tau_{1, m+1}^{k_0, m}\cdot \sigma_1 & \widehat{=} & q^{\underset{i=0}{\overset{m}{\sum}}k_i-m}\cdot \tau_{0, m}^{k_{0, m}}\cdot \sigma_{m+1} \ldots \sigma_2 \sigma_1^2 \sigma_2 \ldots \sigma_{m+1}\ +\\
&&\\
 & + & \underset{i=0}{\overset{m+1}{\sum}}\ \  \underset{u=1}{\overset{k_1-1}{\sum}} q^{u+ \underset{j=0}{\overset{i-1}{\sum}}k_j -(i+1)} (q-1) \tau_{0, i-1}^{k_{0, i-1}}\cdot t_i^u \cdot t_{i+1}^{k_{i+1-u}}\cdot \tau_{i+2, m+1}^{k_{i+1, m}}\cdot \sigma_1^2\sigma_2\ldots \sigma_{i+1}\\
\end{array}
\]
\end{lemma}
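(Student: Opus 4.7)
The plan is to apply Lemma~\ref{oldlem} iteratively to close the gap at index~$0$ in $\tau_{1,m+1}^{k_0,m}\cdot\sigma_1$, successively pushing each looping generator $t_{j+1}^{k_j}$ down to $t_j^{k_j}$. The first step applies Lemma~\ref{oldlem} with $i=1$, $\epsilon=1$, $k=k_0$, and $a=t_2^{k_1}\cdots t_{m+1}^{k_m}\cdot\sigma_1$, yielding a principal term $q^{k_0-1}\,t_0^{k_0}\sigma_1 a\sigma_1$ together with exactly the $i=1$ error contributions appearing in the statement. The two $\sigma_1$'s introduced in the principal term are then rearranged past the trailing looping generators by means of the commutation identity $\sigma_j t_l = t_l\sigma_j$ for $l>j$, which follows from the braid relation $\sigma_j\sigma_{j+1}\sigma_j=\sigma_{j+1}\sigma_j\sigma_{j+1}$ together with $t\sigma_j=\sigma_j t$ for $j\geq 2$, applied to the defining word $t_l=\sigma_l\cdots\sigma_1 t\sigma_1\cdots\sigma_l$.

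Repeating this procedure at each subsequent level $i=2,\ldots,m+1$ closes the gap one position at a time, each step contributing precisely the error terms indexed by $i$ in the statement. After $m+1$ iterations, the principal term is free of gaps and carries the cumulative $q$-factor together with a braiding tail built out of all the $\sigma_i$-pairs introduced along the way. Combining these tails via the braid relations yields the compact form $\sigma_{m+1}\cdots\sigma_2\sigma_1^2\sigma_2\cdots\sigma_{m+1}$, completing the principal contribution. The exponent of $q$ in the $i$th error term, namely $u+\sum_{j=0}^{i-1}k_j-(i+1)$, records the $q^{u-1}$ factor produced by Lemma~\ref{oldlem} at step~$i$ together with the cumulative $q^{\sum_{j<i}k_j-i}$ carried over from the principal reductions already performed at levels $0,\ldots,i-1$.

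A formal induction on $m$, with Lemma~\ref{oldlem} as the single-step reduction, is the cleanest way to organize the argument. The main obstacle I expect is the bookkeeping: correctly accumulating the $q$-exponents across iterations and, more delicately, verifying that the nested $\sigma$-insertions collapse, via the braid relations, into the stated compact tails $\sigma_1^2\sigma_2\cdots\sigma_{i+1}$ in the error terms and $\sigma_{m+1}\cdots\sigma_2\sigma_1^2\sigma_2\cdots\sigma_{m+1}$ in the principal term. The commutation $\sigma_j t_l=t_l\sigma_j$ for $l>j$ is the key technical ingredient that drives the entire reduction, and it is established by a short induction on the length of the defining word of $t_l$ using the braid relations of $B_{1,n}$.
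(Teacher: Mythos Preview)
Your approach is essentially the same as the paper's: iteratively apply Lemma~\ref{oldlem} to push each $t_{j+1}^{k_j}$ down to $t_j^{k_j}$, closing the gap one index at a time and accumulating the principal term and the error terms exactly as you describe. The paper's proof is terser and omits the bookkeeping you spell out (the cumulative $q$-exponent, the commutation $\sigma_j t_l=t_l\sigma_j$ for $l>j$, and the collapse of the braiding tails), but your outlined induction on $m$ with Lemma~\ref{oldlem} as the single-step reduction is precisely the argument.
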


\begin{proof}
Applying Lemma~\ref{oldlem} in $\tau_{1, m+1}^{k_0, m}$ we obtain:

$$t_1^{k_0}\cdot \tau_{2, m+1}^{k_1, m}\ \widehat{=}\ q^{(k_1-1)}\cdot t^{k_1}\cdot \sigma_1^{2}\ +\ \underset{u=1}{\overset{k_1-1}{\sum}} q^{(u-1)} (q-1)\cdot t^{u}\cdot t_2^{(k_1-u)}\cdot \tau_{2, m+1}^{k_1, m}\cdot \sigma_1$$

\noindent We apply again Lemma~\ref{oldlem} in each monomial on the right hand side of the relation that contains gaps in the indices until we reach at monomials in the $t_i$'s with consecutive indices. The result follows.
\end{proof}

\begin{remark}\label{rmk4}\rm
It is worth mentioning that when managing the gaps in the indices of the monomial $\tau_{1, m+1}^{k_0, m}\sigma_1^{\pm 1}$, which is obtained from $\tau_{0, m}^{k_0, m}$ by applying bbm on the first moving strand, we obtain $\tau_{0, m}^{k_0, m}$ followed by a `braiding tail' and elements in the $\Lambda^{aug}$ ${\rm H}_n(q)$-module, some of which are of lower order than $\tau_{0, m}^{k_0, m}$ and some of which are of higher order than $\tau_{0, m}^{k_0, m}$. Then, as shown in \cite{DL2} and explained above, using conjugation and stabilization moves these elements can be expressed as sums of elements in $\Lambda^{aug}$ whose order, with respect to $\tau_{0, m}^{k_0, m}$, \underline{varies}.
\end{remark}

In order to prove the main result of this paper we first present some lemmas that were the motivation for the computation of $\mathcal{S}(S^1 \times S^2)$.

\begin{lemma}\label{newlem}
For $k \in \mathbb{N}$ the following relations hold:
\[
\begin{array}{llll}
i. & tr(t^pt_1^kg_1) & = & q^k z s_{p+k}\ + \ \sum_{j=0}^{k-1}{q^j(q-1)tr(t^{p+j}t_1^{k-j})} \\
&&&\\
ii. & tr(t^pt_1^kg_1^{-1}) & = & q^{k-1} z s_{p+k}\ +\ \sum_{j=0}^{k-2}{q^j(q-1)tr(t^{p+1+j}t_1^{k-1-j})} \\
&&&\\
iii. & tr(t^pt_1^kg_1) & = & (q^2-q+1)tr(t^{p+1}t_1^{k-1}g_1)\ +\  q^k(q-1)s_ks_p\ +\\
&&&\\
&& + & \sum_{j=2}^{k}{q^{j-1}(q-1)^2tr(t^{p+k-j}t_1^{j}g_1)}\\
&&&\\
iv. & tr(t^pt_1^k) & = & \sum_{j=0}^{k}{f_j(q,z)s_{p+k-j}s_{j}}
\end{array}
\]
\end{lemma}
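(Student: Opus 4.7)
The plan is to establish parts (i)--(iv) sequentially. All four rest on the Markov trace axioms of Theorem~\ref{tr}, on the Hecke quadratic relation $g_1^2=(q-1)g_1+q$, and on the defining identity $t_1=g_1 t g_1$.

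For (i), the backbone is the one-step identity $t_1 g_1=(q-1)t_1+q g_1 t$, obtained by writing $t_1 g_1=g_1 t g_1^{2}$ and reducing $g_1^2$ via the quadratic relation. Substituting into $t^p t_1^k g_1=t^p t_1^{k-1}(t_1 g_1)$ and applying the Markov trace produces, after using cyclic invariance to rewrite $\mathrm{tr}(t^p t_1^{k-1} g_1 t)=\mathrm{tr}(t^{p+1}t_1^{k-1}g_1)$, the recursion
\[
\mathrm{tr}(t^p t_1^k g_1)=(q-1)\,\mathrm{tr}(t^p t_1^k)+q\,\mathrm{tr}(t^{p+1}t_1^{k-1}g_1).
\]
Iterating $k$ times with the base case $\mathrm{tr}(t^{p+k}g_1)=z\,s_{p+k}$ (trace rule~(3)) yields (i). Part (ii) follows quickly: the identity $t_1 g_1^{-1}=g_1 t$ is immediate from $t_1=g_1 t g_1$, so $t_1^k g_1^{-1}=t_1^{k-1}g_1 t$, and cyclic invariance turns $\mathrm{tr}(t^p t_1^k g_1^{-1})$ into $\mathrm{tr}(t^{p+1}t_1^{k-1}g_1)$, after which (i) with shifted indices finishes the job.

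Part (iii) is the delicate one. The coefficient $q^2-q+1$ is the signature of the cubic Hecke identity $g_1^3=(q^2-q+1)g_1+q(q-1)$, obtained by applying the quadratic relation twice. My plan is to isolate the block $t_1^2 g_1$ inside $t^p t_1^k g_1=t^p t_1^{k-2}\,(t_1^2 g_1)$ and to perform a \emph{two}-step Hecke reduction on $t_1^2 g_1=g_1 t g_1^{2}t g_1$: moving one interior $g_1$ across $t$ via $t g_1=g_1^{-1}t_1$ assembles a factor of $g_1^3$, whose cubic reduction emits the $(q^2-q+1)$-coefficient of a term that collapses under cyclic invariance to $\mathrm{tr}(t^{p+1}t_1^{k-1}g_1)$, together with a $q(q-1)$-term that, after rewriting a $t_1^k$-factor in terms of $(t'_1)^k$ via the change-of-basis of Theorem~\ref{gp} (modulo lower-order corrections that get absorbed), evaluates by Markov rule~(4) to $s_k s_p$. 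Iterating this two-step reduction through the remaining $t_1^{k-2}$ generates the tail $\sum_{j=2}^{k}q^{j-1}(q-1)^2\,\mathrm{tr}(t^{p+k-j}t_1^j g_1)$, where the coefficient $(q-1)^2$ records the two simultaneous Hecke reductions. The main obstacle will be the combinatorial bookkeeping required to match the stated form term by term, and in particular to ensure that the lower-order corrections from the $t_1\leftrightarrow t'_1$ translation do not pollute the sum.

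Finally, (iv) comes by induction on $k$. The base $k=0$ is $\mathrm{tr}(t^p)=s_p=f_0\,s_p s_0$ with $f_0=1$. For the inductive step, the Hecke identity $g_1=(q-1)+q g_1^{-1}$ applied inside the trace of $a=t^p t_1^k$ gives
\[
(q-1)\,\mathrm{tr}(t^p t_1^k)=\mathrm{tr}(t^p t_1^k g_1)-q\,\mathrm{tr}(t^p t_1^k g_1^{-1});
\]
this alone only recycles (i) and (ii), but substituting in the alternative expression (iii) for $\mathrm{tr}(t^p t_1^k g_1)$ (which is what injects the essential $s_k s_p$ contribution) and then invoking the induction hypothesis on each lower-$k$ trace $\mathrm{tr}(t^{p'}t_1^{k'})$ that appears collapses the whole expression into the stated form $\sum_{j=0}^{k}f_j(q,z)\,s_{p+k-j}s_j$. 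The appearance of exactly two $s$-factors is structural: since $t^p t_1^k\in\mathrm{H}_{1,2}(q)$ and only the looping parameters $s_{k_0},s_{k_1}$ are accessible at rank two, the trace must be a sum of degree-two monomials in the $s_i$'s whose indices add up to $p+k$, which is precisely the constraint $s_{p+k-j}s_j$.
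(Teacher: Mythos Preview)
Your treatment of (i) and (ii) matches the paper's: the recursion $\mathrm{tr}(t^pt_1^kg_1)=(q-1)\,\mathrm{tr}(t^pt_1^k)+q\,\mathrm{tr}(t^{p+1}t_1^{k-1}g_1)$ is exactly what the paper iterates by induction, and your shortcut $t_1g_1^{-1}=g_1t$ for (ii) is the natural one.

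The gap is in (iii), and it propagates into (iv). For (iii) you propose to extract the $q^k(q-1)s_ks_p$ term by rewriting a $t_1^k$-factor as $(t_1')^k$ ``modulo lower-order corrections that get absorbed.'' But (iii) is an exact identity: the corrections coming from the $t_1\leftrightarrow t_1'$ translation are specific elements carrying $g_1^{\pm1}$-tails, and you give no mechanism showing they land precisely in the stated tail $\sum_{j=2}^{k}q^{j-1}(q-1)^2\,\mathrm{tr}(t^{p+k-j}t_1^jg_1)$. Note also that the $j=k$ term of that sum is $q^{k-1}(q-1)^2\,\mathrm{tr}(t^pt_1^kg_1)$, i.e.\ the left-hand side reappears on the right, so (iii) is an implicit relation rather than a closed formula; your sketch does not address this.

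For (iv) your route diverges from the paper's and hits a structural obstacle. You combine $(q-1)\,\mathrm{tr}(t^pt_1^k)=\mathrm{tr}(t^pt_1^kg_1)-q\,\mathrm{tr}(t^pt_1^kg_1^{-1})$ with (iii) and (ii), then plan to invoke the induction hypothesis on ``lower-$k$'' traces. But the right-hand side of (iii) consists of traces $\mathrm{tr}(\cdots g_1)$, not bare ones; expanding the $j=k$ term via (i) produces, at $i=0$, a contribution proportional to $\mathrm{tr}(t^pt_1^k)$ itself, so the induction does not close without first isolating this term and dividing through by a nontrivial coefficient---none of which you mention. The paper sidesteps all of this: it does \emph{not} use (iii) in the proof of (iv). Instead it invokes the explicit identity
\[
t_1^k \;=\; q^k(t_1')^k \;-\; \sum_{j=1}^{k}q^j(q^{-1}-1)\,t^{\,j-1}t_1^{\,k+1-j}g_1^{-1},
\]
which immediately gives $\mathrm{tr}(t^pt_1^k)=q^ks_ks_p-\sum_j q^j(q^{-1}-1)\,\mathrm{tr}(t^{p+j-1}t_1^{k+1-j}g_1^{-1})$. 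Every $g_1^{-1}$-trace on the right has $t_1$-exponent at most $k-1$ after applying (ii), so strong induction on $k$ goes through directly. This is the missing idea: the source of the exact $s_ks_p$ term is the conversion of $t_1^k$ to $(t_1')^k$ \emph{at the level of the algebra element}, with the correction terms carrying strictly smaller $t_1$-exponent and a $g_1^{-1}$, so that (ii) (not (iii)) feeds the induction.
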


\begin{proof}
We only prove relations (i) and (iv). Relations (ii) and (iii) follow similarly.

\smallbreak

\begin{itemize}
\item[i.] We prove relations (i) by induction on $k$. For $k=1$ we have that:

$$tr(t^pt_1g_1)=(q-1)tr(t^pt_1)+qtr(t^pg_1t)=(q-1)tr(t^pt_1)+qzs_{p+1},$$ 

\noindent and so the relation holds for $k=1$. Assume that the relation holds for $k-1$. Then for $k$ we have:

\[
\begin{array}{lllc}
tr(t^pt_1^kg_1) & = & (q-1)tr(t^pt_1^k)\ +\ qtr(t^{p+1}t_1^{k-1}g_1) & \overset{ind. step}{=}\\
&&&\\
& = & (q-1)tr(t^pt_1^k)\ +\ q\sum_{j=0}^{k-2}{q^j(q-1)tr(t^{p+1+j}t_1^{k-1-j})}\ +\ q^k z s_{p+k} & \overset{j=u-1}{=}\\
&&&\\
& = & q^k z s_{p+k}\ +\  q^0(q-1)tr(t^pt_1^k)\ +\ \sum_{u=1}^{k-1}{q^u(q-1)tr(t^{p+u}t_1^{k-u})} & =\\
&&&\\
& = & q^k z s_{p+k}\ + \ \sum_{u=0}^{k-1}{q^u(q-1)tr(t^{p+u}t_1^{k-u})}. &\\
\end{array}
\]
\\

\item[iv.] We have that: $t_1^k  =  q^{k}{t_1^{\prime}}^k\ -\ \sum_{j=1}^{k}{q^{j}(q^{-1}-1)t^{j-1}t_1^{k+1-j}g_1^{-1}}$ and so

$$tr(t^pt_1^k)\ =\ q^ks_ks_p\ -\ \sum_{j=1}^{k}{q^j(q^{-1}-1)tr(t^{p+j-1}t_1^{k+1-j}g_1^{-1})}.$$

\noindent We also have that:

$$tr(t^{p+j-1}t_1^{k+1-j}g_1^{-1})\ = \ q^{k-j} z s_{p+k}\ +\ \sum_{i=0}^{k-1-j}{q^i(q-1)tr(t^{p+i+j}t_1^{k-i-j})}\ {\rm and\ thus}$$

\[
\begin{array}{lcll}
tr(t^pt_1^k) & = & q^ks_ks_p\ - kq^k(q^{-1}-1)zs_{p+k}\ -&\\
&&&\\
& - &  \sum_{j=1}^{k}{\left(\sum_{i=0}^{k+1-j}{q^{j+i}(q-1)(q^{-1}-1)tr(t^{p+j+i}t_1^{k-j-i})}\right)} & (a).
\end{array}
\]

\noindent We now prove relations (iv) by strong induction on $k$. For $k=1$ we have $tr(t^pt_1)=qs_1s_p+(q-1)zs_{p+1}$, which holds.

\smallbreak

\noindent Assume that it holds for $k=1, \ldots, m-1$. Then for $k=m$ we have that:

\[
\begin{array}{lcl}
tr(t^pt_1^m) & \overset{(a)}{=} & q^ms_ms_p-mq^m(q^{-1}-1)zs_{m+p}\ -\\
&&\\
& - & \sum_{j=1}^{m}{\left(\sum_{i=0}^{m+1-j}{q^{j+i}(q-1)(q^{-1}-1)\underline{tr(t^{p+j+i}t_1^{m-j-i})}}\right)}\ \overset{ind. step}{=}\\
&&\\
& = & q^ms_ms_p-mq^m(q^{-1}-1)zs_{m+p}\ -\\
&&\\
& - & \sum_{j=1}^{m}{\sum_{i=0}^{m+1-j}{q^{j+i}(q-1)(q^{-1}-1)\left(\sum_{r=0}^{m-i-j}{f_r(q,z)s_{p+m-r}s_{r}} \right)}}\ =\\
&&\\
& = & \sum_{\mu=0}^{m}{f_{\mu}(q,z)s_{p+k-\mu}s_{\mu}}
\end{array}
\]

\end{itemize}
\end{proof}

Using now Lemma~\ref{newlem} we prove that elements in the $s_i$'s are dependent in $\mathcal{S}(S^1\times S^2)$. More precisely we have the following:

\begin{prop}
For $k\in \mathbb{N}$ the following holds in $\mathcal{S}(S^1 \times S^2)$:
\begin{itemize}
\item[i.] $s_k\ =\ \sum_{i=1}^{k-1} A_i\cdot s_{i}s_{k-i}$,
\smallbreak
\item[ii.] $\sum_{i, j\in \mathbb{N},\ i+j=k} s_is_j\ =\ \sum_{i, j, h\in \mathbb{N},\ i+j+h=k} B_{i,j,h}\cdot s_is_js_h$
\end{itemize}

\noindent where $A_i, B_{i,j,h} \in \mathbb{C}[q^{\pm 1}, z^{\pm 1}]$.
\end{prop}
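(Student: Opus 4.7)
The plan is to deduce both relations from the braid-band-move invariance equations applied to carefully chosen elements of $\Lambda^{\text{aug}}_{(k)}$, expanding each side via the formulas collected in Lemma~\ref{newlem}.

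For part~(i), I would take $\tau = t^k \in \Lambda^{\text{aug}}_{(k)}$. Since $m=0$ we have $e_1 = 0$ and $e_2 = 2k+1$, so the two bbm invariance equations on the first moving strand reduce to
\[
s_k \;=\; \mathrm{tr}(t^k) \;=\; \frac{\lambda^k}{z}\,\mathrm{tr}(t_1^k g_1) \;=\; \frac{\lambda^{k-1}}{z}\,\mathrm{tr}(t_1^k g_1^{-1}).
\]
Lemma~\ref{newlem}(i) expands $\mathrm{tr}(t_1^k g_1)$ as $q^k z\, s_k$ plus a sum of traces $\mathrm{tr}(t^j t_1^{k-j})$ for $0 \le j \le k-1$; Lemma~\ref{newlem}(iv) then writes each such trace as a polynomial combination of products $s_{k-r} s_r$ with $0 \le r \le k-j$. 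The $r=0$ contributions give further $s_k$ terms (since $s_0 = 1$), while the $r \ge 1$ contributions give exactly the products $s_i s_{k-i}$, $1 \le i \le k-1$, appearing in the claim. Collecting all $s_k$ terms on one side yields a relation of the shape
\[
\bigl(1 - \alpha(q,z)\bigr)\, s_k \;=\; \sum_{i=1}^{k-1} \beta_i(q,z)\, s_i s_{k-i}.
\]

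The delicate step is recasting this into the form stated with $A_i \in R$. For this I would use both bbm equations simultaneously: since $q\lambda - 1 = (1-q)/z$, the leading scalars $1-(q\lambda)^k$ and $1-(q\lambda)^{k-1}$ share the common factor $(q-1)/z$. Forming a suitable $R$-linear difference of the positive and negative bbm relations cancels this factor and leaves $s_k$ multiplied by a unit of $R$; dividing through produces part~(i). For part~(ii) I would run the same mechanism on $\tau = t^{k-p} t_1^p \in \Lambda^{\text{aug}}_{(k)}$ for each admissible $p$. By Lemma~\ref{newlem}(iv), $\mathrm{tr}(\tau)$ is an $R$-combination of the double products $s_i s_j$ with $i+j = k$. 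After the bbm, $\tau$ becomes $t_1^{k-p} t_2^p \sigma_1^{\pm 1}$; the index-gap is managed via Lemma~\ref{newgaps}, producing monomials with three nonzero $t_i$-exponents together with braiding tails. Iterating the trace-rule arguments used to prove Lemma~\ref{newlem}(iv) on the additional strand expresses this as a polynomial combination of triple products $s_i s_j s_h$ with $i+j+h = k$, together with lower-order double products. Summing over the admissible $p$ and applying the same scalar-cancellation trick as in~(i) yields the identity of part~(ii).

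The principal obstacle is precisely this coefficient-cancellation step: the raw bbm equation always leaves $s_k$ (respectively $\sum_{i+j=k} s_i s_j$) with a non-unit scalar on the left, so the proof requires combining two bbm relations to extract a unit leading coefficient. Verifying that the common factor $(q-1)/z$ cancels cleanly, and that the resulting $A_i$, $B_{i,j,h}$ genuinely lie in $\mathbb{C}[q^{\pm 1},z^{\pm 1}]$ rather than only in its fraction field, is the technical heart of the argument; once this is in hand, the remainder is a routine bookkeeping of the expansions provided by Lemmas~\ref{newlem} and~\ref{newgaps}.
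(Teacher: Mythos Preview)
Your core strategy---apply a braid band move to $t^k$, expand the trace of $t_1^k\sigma_1^{\pm 1}$ via the lemmas, and collect terms---is exactly what the paper does. The only cosmetic difference is that the paper first invokes Lemma~\ref{oldlem} to rewrite $t_1^k\sigma_1$ as $q^{k-1}t^k\sigma_1^3 + \sum_{u}q^{u-1}(q-1)t^u t_1^{k-u}\sigma_1^2$ and then evaluates, whereas you go straight to Lemma~\ref{newlem}(i); these amount to the same computation.

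Where you diverge from the paper is in the ``coefficient-cancellation'' step. You correctly identify that the raw bbm equation produces $(1-\alpha)s_k = \sum \beta_i s_i s_{k-i}$ with $1-\alpha$ a non-unit of $R$, and you propose combining the $\sigma_1$ and $\sigma_1^{-1}$ equations to clear this. The paper does \emph{not} perform any such step: it simply asserts the conclusion. In fact the worked examples immediately following the proof display coefficients such as
\[
\frac{\lambda q(q-1)^2}{z\bigl[1-\lambda(q^2-q+1)\bigr]}
\]
for $A_1$ when $k=2$, which visibly lie in the fraction field and not in $\mathbb{C}[q^{\pm 1},z^{\pm 1}]$. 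So the statement is being used informally: what the paper actually derives (and what suffices for Theorem~\ref{depthm}) is an $R$-linear relation $c\cdot s_k = \sum \beta_i s_i s_{k-i}$ with $c$ a non-unit, i.e.\ a dependence relation in the module rather than a genuine expression of $s_k$ over $R$.

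In short: your basic argument matches the paper's, and your instinct that the coefficient ring claim needs justification is sound---but the paper does not supply that justification, and its own examples suggest the literal claim $A_i\in\mathbb{C}[q^{\pm 1},z^{\pm 1}]$ is not what is meant. You need not pursue the cancellation trick to reproduce the paper's proof.
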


\begin{proof}
We only prove relations (i). Relations (ii) follow similarly.

\bigbreak

Consider the element $t^k$ and perform a positive {\it bbm} to obtain $t_1^k\cdot \sigma_1$. Then, we obtain the equation $X_{\widehat{t^k}}=X_{\widehat{t_1^k\cdot \sigma_1}}$. Since the trace is linear, applying Lemma~\ref{oldlem} on $tr(t_1^k\cdot \sigma_1)$ we obtain:

\[
tr(t_1^k\cdot \sigma_1)\ =\ q^{k-1}\cdot tr(t^k\cdot \sigma_1^3)\ +\ \sum_{u=1}^{k-1}q^{u-1}(q-1)\cdot tr(t^{u}t_1^{k-u}\cdot \sigma_1^2)
\]

\noindent and by simple calculations we have that $tr(t^k\cdot \sigma_1^3)=s_k\cdot [(q^2-q+1)\cdot z+q(q-1)]$. Applying also Lemma~\ref{newlem} on all $\sum_{u=1}^{k-1} tr(t^{u}t_1^{k-u}\cdot \sigma_1^2)$ we obtain:

\[
s_k\ =\ \sum_{i=1}^{k-1} A_i\cdot s_{i}s_{k-i},\ \ {\rm for}\ A_i\in \mathbb{C}[q^{\pm 1}, z^{\pm 1}].
\]
 \noindent and this concludes the proof.
\noindent 
\end{proof}

\subsection{The main result}

We are now in position to prove the main result of this paper.

\begin{thm}\label{depthm}
The free part of the HOMFLYPT skein module of $S^1\times S^2$, $\mathcal{S}(S^1\times S^2)$, is generated by the empty knot.
\end{thm}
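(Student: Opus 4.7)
The plan is to prove that in $\mathcal{S}(S^1 \times S^2)$ every unknown $s_{i,j}^{\epsilon_{i,j}} \in S_{(k)}$ with $k \ne 0$ is a torsion element, so that the free part reduces to the rank-one $R$-submodule generated by the empty knot. By Theorem~\ref{comb} and its corollary, the infinite system of bbm-equations splits into self-contained level-$k$ subsystems, allowing one to fix a single $k \in \mathbb{Z}$ and work entirely inside $S_{(k)}$.

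The first main step is a well-ordered induction on $S_{(k)}$, whose minimum element is $s_k$. For the inductive step, given $u \in S_{(k)}$ with $u > s_k$, I would select an element $\tau_u \in \Lambda_{(k)}^{\mathrm{aug}}$ whose trace has $u$ as its maximum $s$-monomial; Proposition~\ref{resp} together with the block-triangular inverse change-of-basis matrix $M^{-1}$ from $\Lambda$ to $\Lambda'$ guarantees the existence of such a $\tau_u$. Performing $bbm_1$ on $\tau_u$, then using Lemma~\ref{newgaps} to manage the induced gap and re-expressing the outcome through $M^{-1}$, yields an equation $X_{\widehat{\tau_u}} = X_{\widehat{bbm_1(\tau_u)}}$ which, after the traces are unpacked via Lemma~\ref{newlem}, expresses $u$ as an $R$-linear combination of strictly smaller elements of $S_{(k)}$. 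By the inductive hypothesis each such smaller element already equals $c \cdot s_k$ for some $c \in R = \mathbb{C}[q^{\pm1}, z^{\pm1}]$, so $u = c_u \cdot s_k$ for some $c_u \in R$.

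To finish, I would invoke relation (i) of the preceding proposition inside $\mathcal{S}(S^1\times S^2)$: for $k \in \mathbb{N}$, $s_k = \sum_{i=1}^{k-1} A_i\, s_i s_{k-i}$. Each product $s_i s_{k-i}$ lies in $S_{(k)}$ and satisfies $s_i s_{k-i} > s_k$ by Definition~\ref{ord2}(b), so the previous step gives $s_i s_{k-i} = c_i\, s_k$, and substitution yields $\bigl(1 - \sum A_i c_i\bigr) s_k = 0$. Verifying that $1 - \sum A_i c_i$ is a nonzero element of $R$ (for instance by comparing the $z$-degree produced on the right-hand side of the bbm on $t^k$ against the trivial left-hand side) then shows $s_k$ is torsion; the case $k < 0$ follows analogously using $bbm_{-1}$. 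Hence every monomial of $S_{(k)}$ with $k \ne 0$ is torsion, and the free part of $\mathcal{S}(S^1\times S^2)$ is generated solely by the empty knot.

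The main obstacle is establishing the triangularity of the bbm-equations with respect to the $S_{(k)}$-ordering used in the induction. As highlighted in Remark~\ref{rmk4}, the bbm followed by Lemma~\ref{newgaps} can a priori produce $\Lambda^{\mathrm{aug}}$-terms of both lower and higher order than $\tau_u$. One must therefore exploit Proposition~\ref{resp} together with explicit trace computations to verify that, on the level of $S_{(k)}$-monomials, the net resulting equation genuinely isolates $u$ as the unique maximum with an invertible $R$-coefficient and all other monomials strictly smaller. This delicate bookkeeping, combining Proposition~\ref{resp}, Lemmas~\ref{newgaps} and~\ref{newlem}, and the change-of-basis matrix $M^{-1}$, is the technical heart of the argument.
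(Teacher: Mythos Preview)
Your proposal follows essentially the same architecture as the paper's proof: split into level-$k$ subsystems, reduce every monomial in $S_{(k)}$ to a multiple of $s_k$, then show $s_k$ itself is torsion via the bbm on $t^k$. The key divergence is precisely at the obstacle you flag. You attempt a \emph{downward} induction, seeking for each $u>s_k$ a bbm-equation that isolates $u$ with invertible coefficient against strictly smaller monomials; as you correctly note, Remark~\ref{rmk4} obstructs this, since the gap-management of $\tau_+\sigma_1^{\pm1}$ produces $\Lambda^{\mathrm{aug}}$-terms of varying order (indeed $\tau$ itself reappears with a braiding tail), so the maximal $s$-monomial on the right need not be strictly below $u$.

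The paper sidesteps this by reversing the direction. Rather than solving for the maximum monomial, it solves each bbm-equation for the \emph{minimum} $s$-monomial appearing (Case~B in the paper's proof), thereby expressing that minimum as a sum of strictly \emph{higher} monomials in $S_{(k)}$. Having shown every element of $S_{(k)}$ is expressible in higher-order terms, the paper then invokes a ``general position argument'' to reverse the process and obtain the downward expression needed for the induction to $s_k$. This upward-then-reverse maneuver is the paper's device for circumventing the triangularity problem you identify; your direct downward approach would require the additional bookkeeping you describe, which the paper does not carry out explicitly either. In short, your plan is sound and matches the paper's in spirit, but the paper's actual proof handles the Remark~\ref{rmk4} obstacle by the Case~A/Case~B dichotomy (solving for the minimum rather than the maximum) followed by the reversal step, rather than by the invertible-leading-coefficient argument you propose.
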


\begin{proof}

Consider an element $\tau \in \Lambda^{aug}$ and perform a bbm on the first moving strand. We obtain $\tau_+\cdot \sigma_1^{\pm 1}$, and by applying Lemma~\ref{newgaps} and Remark~\ref{rmk4} we have that $\tau_+\sigma_1 \ \widehat{\cong}\ \sum_i A_i\cdot \tau_i$, where $A_i\in \mathbb{C}[q^{\pm 1}, z^{\pm 1}]$ and $\tau_i\in \Lambda^{aug}_k, \forall i$. We now convert the elements $\tau_i$ to sum of elements in $\Lambda^{\prime}$ using the inverse of the infinite matrix and we obtain:

\[
\tau_+\cdot \sigma_1^{\pm 1}\ \widehat{\cong}\ A\cdot \tau^{\prime}\ +\ \sum_j A_j\cdot \tau_j^{\prime},\ \ {\rm where}\ \tau^{\prime} \sim \tau\ {\rm and}\ \tau_i^{\prime}\ {\rm are\ of\ various\ order\ with\ respect\ to}\ \tau^{\prime}.
\] 

\noindent Set now $T\ =\  \sum_j A_j\cdot \tau_j^{\prime}$. We consider the following cases:

\bigbreak

\begin{itemize}
\item[Case\ A.] If $\tau_j^{\prime}\ >\ \tau^{\prime}$ for all $j$, then $X_{\widehat{\tau^{\prime}}}\ =\ X_{\widehat{T}}\ \Rightarrow\ tr(\tau^{\prime})\ =\ \underset{i}{\Sigma}b_i \cdot tr(\tau_i^{\prime})\ \Rightarrow\ s\ =\ \underset{i}{\Sigma}b_i \cdot S_i$, where $b_i\in \mathbb{C}[q^{\pm 1}, z^{\pm 1}]$. Thus, $s\in S_k$ is written as a sum of higher order terms in $S_k$.

\bigbreak

\item[Case\ B.] If there exist a $j$ such that $\tau_j^{\prime}\ \leq \ \tau^{\prime}$, then set $s\in S_k$ the minimum monomial in $s_i$'s that appear in the computation of $tr(\tau^{\prime})$ and $tr(\tau_j^{\prime})$ and express this element $s$ as a sum of higher order terms in $S_k$, as in Case A.
\end{itemize}

\noindent We conclude that in both cases, the monomials in $s_i$'s appearing in $S_k$ can be expressed as sums of other elements in $S_k$ of \emph{higher order}. By a general position argument, we may reverse this process: every element in $S_k$ can equivalently be written as a sum of elements of \emph{lower order} in $S_k$. Then, by induction on the ordering in $S_k$, we deduce that any $s \in S_k$ satisfies
\[
s = a \cdot \min\{S_k\} = a \cdot s_k,
\]
for some scalar $a \in \mathbb{C}[q^{\pm1}, z^{\pm1}]$. Thus, the value of every element of $S_k$ in $\mathcal{S}(S^1 \times S^2)$ depends only on the value of the minimal element $s_k$.

\smallbreak

Consider now the element $t^k$ and apply the procedure described before in order to obtain the equation for the infinite system:
$s_k\ =\ \underset{i}{\Sigma} c_i\cdot s_i$, where $s_i \geq s_k$ for all $i$. This is illustrated in the following diagram:

\[
\begin{array}{cccl}
s_k & \overset{tr}{\longleftarrow} \ {\Lambda^{\prime}}^{aug} \ni t^k & \overset{bbm}{\rightarrow}  & t_1^k\cdot \sigma_1^{\pm 1} \\
\uparrow  &                              &                                 & {\downarrow}\ {\rm conj.} \ \&\ {\rm stab.\ moves} \\
|	&                                                &  &  a\cdot t^k + \underset{i}{\Sigma} a_i\cdot t^{u_i}t_1^{k-u_i}\\
\downarrow	&                              &                                                             &\downarrow {\rm matrix} \\
\underset{i}{\Sigma} c_i\cdot s_i	&     \overset{tr}{\longleftarrow}             &                              & a\cdot t^k + \underset{i}{\Sigma} a_i\cdot t^{v_i}{t_1^{\prime}}^{k-v_i}  \\
\end{array}
\]

From Theorem~\ref{gp}, all these monomials can be written as $a\cdot s_k$, and thus in $\mathcal{S}(S^1\times S^2)$ we have that $b\cdot s_k\ =\ 0$, for some $b\in \mathbb{C}[q^{\pm 1}, z^{\pm 1}]$. Thus, $s_k$ is also a torsion element in $\mathcal{S}(S^1\times S^2)$. Moreover, the equations for the infinite system of equations obtained by performing bbm's on the unknot are:

\[
1\overset{bbm}{\rightarrow}\sigma_1^{\pm 1} \Leftrightarrow e=e
\]

\noindent and thus, the unknot does not produce torsion in $\mathcal{S}(S^1\times S^2)$. 

\end{proof}

\begin{ex}\rm
In this example we present some calculations corresponding to $S_k$ for $k=0, 1$ and $2$.

\smallbreak

- For the element $t \in \Lambda_1$ we have:

\[
t\overset{bbm}{\rightarrow}t_1\sigma_1^{\pm 1} \Leftrightarrow s_{1}=\frac{\lambda}{z}\cdot \left[q\cdot (q-1)+(q^2-q+1)\cdot z \right]\cdot s_{1}
\]

\bigbreak

- For the element in $t^2 \in \Lambda_2$ we have:

\[
\begin{matrix}
	 
	& & s_2 = \frac{\lambda\cdot q\cdot (q-1)^2}{z\cdot \left[1-\lambda\cdot (q^2-q+1) \right]}\cdot s_1^2 \\
	
t^2 \overset{bbm}\rightarrow t_1^{2}\sigma_1^{\pm 1} & \Leftrightarrow & {\rm and}\\
		
	& &	s_2 = \frac{\lambda^2\cdot q\cdot (q-1)^3+q^2\cdot (q-1)}{z-\lambda^2\cdot \left[(q-1)^4\cdot z+3\cdot q\cdot (q-1)^2\cdot z+ q^2\cdot (q-1)+q^2\cdot z \right]}\cdot s_1^2\\
	\end{matrix}
\]

\bigbreak

- For the element $t^{-1}t_1 \in \Lambda_0$ we have:

\[
\begin{matrix}
	 
	& & s_{-1}s_1 = \frac{-z\cdot ( z+q-1 )}{q}\cdot e \\
	
t^{-1}t_1 \overset{bbm}\rightarrow t_1^{-1}t_2\sigma_1^{\pm 1} & \Leftrightarrow & {\rm and}\\
		
	& &	s_{-1}s_1 = \frac{(q^{-2}+1)\cdot z^2+(q^{-1}-1)^2\cdot (q-1)\cdot z}{z-q\cdot (q^{-1}-1)^2}\cdot e\\
	\end{matrix}
\]
\end{ex}

\section{Conclusions}

In this paper we computed the HOMFLYPT skein module of \(S^1 \times S^2\) via the braid-theoretic approach. We developed a systematic method based on the infinite system of equations derived from braid band moves, showing that all elements of the skein module can ultimately be expressed in terms of the empty knot. We proved that the free part of the HOMFLYPT skein module of \(S^1 \times S^2\) is generated by the empty knot and identified the presence of torsion, arising naturally from the relations imposed by the braid band moves.

It is worth emphasizing that \(S^1 \times S^2\) serves as the first known example of a 3-manifold where torsion in its HOMFLYPT skein module is detected via the braid-theoretic method. The methods developed here lay the groundwork for extending similar techniques to other 3-manifolds, notably lens spaces \(L(p, q)\) and beyond.

\end{document}